\documentclass[12pt]{amsart}
\usepackage{enumerate}
\usepackage{amssymb}
\usepackage{graphicx}
\usepackage{amscd, color}
\usepackage{amsmath}
\usepackage{amsfonts}
\usepackage[english]{babel}
\usepackage{epsfig}
\usepackage{latexsym,amssymb,amsmath}
\usepackage{mathrsfs}
\usepackage{enumerate}
\usepackage{hyperref}
\usepackage{graphicx,graphics}
\usepackage{amsfonts}
\usepackage{amsthm}

\usepackage{fancyvrb}
\usepackage{alltt}
\usepackage{fancyhdr}
\usepackage[parfill]{parskip}
\usepackage[shortlabels]{enumitem}
\usepackage{afterpage}
\usepackage{setspace}

\input{xy}
\xyoption{all}
\usepackage[utf8]{inputenc}
\usepackage{color}

\usepackage{tikz-cd}

\numberwithin{equation}{section}
\newtheorem{theorem}{Theorem}[section]

\newtheorem{proposition}[theorem]{Proposition}

\newtheorem{example}[theorem]{Example}
\newtheorem{remark}[theorem]{Remark}

\newtheorem{lemma}[theorem]{Lemma}

\newtheorem{definition}[theorem]{Definition}

\usepackage{tikz}
\usepackage{hyperref}
\usetikzlibrary{matrix}

\DeclareMathOperator{\Int}{Int}

\begin{document}

\title{A Jordan Curve Theorem for 2-dimensional Tilings}

\author{Diego Fajardo-Rojas, Natalia Jonard-P\'erez}

\subjclass[2020]{ Primary: 54H30, 54H99, 54D05, 52C20; Secondary: 05C10, 	05C40,  68U05, 54B15, 54D10 }

\keywords{Digital plane, Tiling,  Alexandrov Topologies, Jordan Curve, simple closed curve, cycle in a graph}

\thanks{Conacyt grant 252849 (M\'exico) and by  PAPIIT grant IN115819  (UNAM, M\'exico).}

\address{Departamento de  Matem\'aticas,
Facultad de Ciencias, Universidad Nacional Aut\'onoma de M\'exico, 04510 Ciudad de M\'exico, M\'exico.}

\email{(D.\,Fajardo-Rojas) dfajardorojas@gmail.com}
\email{(N.\,Jonard-P\'erez) nat@ciencias.unam.mx}

\maketitle

\begin{abstract}
    The classical Jordan curve theorem for digital curves asserts that the Jordan curve theorem remains valid in the Khalimsky plane. Since the Khalimsky plane is a quotient space of $\mathbb R^2$ induced by a tiling of squares, it is natural to ask for which other tilings of the plane it is possible to obtain a similar result. In this paper we prove a Jordan curve theorem which is valid for every locally finite tiling of $\mathbb R^2$.  
    As a corollary of our result, we generalize some classical Jordan curve theorems for grids of points, including Rosenfeld's  theorem.

\end{abstract}

\section{Introduction}
The Jordan curve theorem asserts that a simple closed curve divides the plane into two connected components, one of these components is bounded whereas the other one is not. This theorem was proved by Camille Jordan in 1887 in his book \textit{Cours d'analyse} \cite{Jordan}.

During the decade of the seventies, Azriel Rosenfeld published a series of articles \cite{Rosenfeld1, Rosenfeld2, Rosenfeld3, Rosenfeld4, Rosenfeld5, Rosenfeld6} wherein he studies connectedness properties of the grid of points with integer coordinates $\mathbb{Z}^{2}$. For any point $(n,m)\in\mathbb{Z}^{2}$ Rosenfeld defines its \textit{4-neighbors} as the four points $(n,m\pm 1)$ and $(n\pm 1, m)$, whilst the four points $(n\pm 1,m\pm 1)$ as well as the 4-neighbors are its \textit{8-neighbors}. For $k=4$ or $k=8$, a \textit{k-path} is a finite sequence of points $(x_{0}, \dots, x_{n})$ in $\mathbb{Z}^{2}$ such that for every $i\in\{1, \dots, n\}$, $x_{i-1}$ is a $k$-neighbor of $x_{i}$. A subset $S$ of $\mathbb{Z}^{2}$ is \textit{k-connected} if there is a $k$-path between any two elements of $S$. A \textit{k-component} of $S$ is a maximal $k$-connected subset. Lastly, a \textit{simple closed k-path} is a $k$-connected set $J$ which contains exactly two $k$-neighbors for each of its points.  In this case we can describe $J$ as a $k$-path $(x_{1}, \dots, x_{n})$, where  $x_i$ is a $k$-neighbor of $x_{j}$ iff $i\equiv j \pm 1$ (modulo $n$).   In  \cite{Rosenfeld2}, Rosenfeld proves the following discrete Jordan curve theorem.

\begin{theorem}\label{teo:Rosenfeld}
Let $J\subset\mathbb{Z}^{2}$ be a simple closed $4$-path with at least five points. Then $\mathbb{Z}^{2}\setminus J$ has exactly two $8$-components. 
\end{theorem}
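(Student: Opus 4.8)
The plan is to lift the discrete problem to the continuous plane and invoke the classical Jordan curve theorem. First I would build from $J=(x_1,\dots,x_n)$ the polygonal curve $\Gamma\subset\mathbb R^2$ obtained by joining each $x_i$ to $x_{i+1}$ (indices modulo $n$) by the straight unit segment between them; since consecutive points are $4$-neighbours these are horizontal or vertical unit segments lying on the integer grid lines. The hypothesis that every point of $J$ has \emph{exactly} two $4$-neighbours in $J$ guarantees that distinct segments of $\Gamma$ meet only at shared endpoints (two axis-aligned unit grid segments cannot cross transversally, and they coincide only if they join the same pair of points), so $\Gamma$ is a simple closed curve. The classical Jordan curve theorem then gives $\mathbb R^2\setminus\Gamma=\Int\sqcup\Ext$, with $\Int$ bounded, $\Ext$ unbounded, and each connected.

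Next I would transfer this decomposition to the lattice. Because $\Gamma$ is contained in the union of the integer grid lines and the interiors of its edges contain no lattice points, no point of $\mathbb Z^2\setminus J$ lies on $\Gamma$; hence
$$\mathbb Z^2\setminus J=(\mathbb Z^2\cap\Int)\sqcup(\mathbb Z^2\cap\Ext).$$
Both pieces are non-empty: $\Ext$ is unbounded, so it contains lattice points far from $J$, and the hypothesis of at least five points forces $\mathbb Z^2\cap\Int\neq\emptyset$. Here I would use that a simple closed $4$-path has an even number of points (it is a closed walk in the bipartite grid graph), that the only such path with fewer than six points is the loop around a single unit cell, whose interior contains no lattice point, and that the exactly-two-neighbour condition forbids width-one necks and reflex corners, so any larger $J$ must enclose at least one interior lattice point. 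This is precisely the step where the cardinality hypothesis is indispensable.

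I would then show that $8$-adjacency cannot cross $\Gamma$: if $p,q\in\mathbb Z^2\setminus J$ are $8$-neighbours, the open segment $\overline{pq}$ is disjoint from $\Gamma$ --- for horizontal or vertical $pq$ this segment is a grid edge whose endpoints are not in $J$, and for a diagonal $pq$ the only points it could share with the grid lines are its own endpoints --- so $p$ and $q$ lie in the same component of $\mathbb R^2\setminus\Gamma$. Consequently no $8$-path can pass from $\Int$ to $\Ext$, each $8$-component is contained entirely in one of the two sets, and $\mathbb Z^2\setminus J$ has at least two $8$-components.

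The remaining, and hardest, step is the upper bound: that $\mathbb Z^2\cap\Int$ and $\mathbb Z^2\cap\Ext$ are each $8$-connected, which together with the previous paragraph yields exactly two components. The idea is that, since $\Gamma$ is a union of grid edges, $\overline{\Int}$ is a simply connected polyomino and $\mathbb Z^2\cap\Int$ is exactly its set of interior lattice corners; given two such corners I would join them by a path inside the open connected set $\Int$ and discretize it into an $8$-path of interior lattice points. I expect the genuine difficulty to lie precisely in this discretization: a continuous path may hug the boundary $\Gamma$, so naive snapping to the nearest lattice point can land on $J$, and at convex corners of $\Gamma$ the correct move is a diagonal ($8$-adjacency) step. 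I would handle this by always retreating to an interior corner of the polyomino and proving, by induction on the number of cells or by a covering argument through shared open edges, that consecutive cells of $\Int$ contribute interior corners that are $8$-adjacent; applying the same argument to the unbounded region $\overline{\Ext}$ then finishes the proof.
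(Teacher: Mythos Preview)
Your route is entirely different from the paper's. The paper never proves Theorem~\ref{teo:Rosenfeld} directly: it is quoted in the introduction as Rosenfeld's result and then recovered at the end of Section~\ref{Sec:Well behaved} as the special case $\mathcal{T}=$ square tiling of Theorem~\ref{teo:cuadrosrosenfeld}. In that framework the points of $\mathbb{Z}^{2}$ become the faces of $\mathcal{D}_{\mathcal{T}}$, a simple closed $4$-path becomes an edge-Jordan curve (faces together with the shared edges), $4$- and $8$-adjacency become edge- and vertex-adjacency, and the two $8$-components emerge from Theorem~\ref{teo:jordanteselaciones} (hence ultimately from the Neumann--Lara--Wilson graph-theoretic Jordan theorem, Theorem~\ref{teo:jordangraficas}) together with the face-replacement argument in the proof of Theorem~\ref{teo:cuadrosrosenfeld}. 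Your argument instead realizes $J$ as a rectilinear simple closed curve $\Gamma\subset\mathbb{R}^{2}$ and invokes the classical Jordan curve theorem. The paper's approach buys uniformity over all locally finite tilings; yours is more elementary for the square grid and avoids the Alexandrov machinery altogether.

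The separation half of your argument is correct: $\Gamma$ is simple, lattice points off $J$ miss $\Gamma$, and the closed segment between $8$-neighbours $p,q\notin J$ is disjoint from $\Gamma$ (your case analysis for axis-parallel and diagonal steps is fine). Two issues remain. First, the assertion that the exactly-two-$4$-neighbour condition ``forbids \dots\ reflex corners'' is false: the boundary of $[0,3]^{2}$ with one unit corner square removed is a simple closed $4$-path with a reflex corner at $(2,2)$. What the condition actually rules out are width-one strips and pinches (a third $4$-neighbour in $J$), and it is this, together with the bipartiteness observation that no induced $6$-cycle exists in the grid, that forces $\Int\cap\mathbb{Z}^{2}\neq\emptyset$ once $|J|\ge 5$; Pick's theorem makes this clean. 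Second, the step you yourself flag as hardest---$8$-connectedness of $\mathbb{Z}^{2}\cap\Int$ and of $\mathbb{Z}^{2}\cap\Ext$---is only sketched, and the sketch (``snap a continuous path, retreat to interior corners, induct on cells'') hides real work: one must show that adjacent unit cells of the polyomino $\overline{\Int}$ always contribute $8$-adjacent interior lattice points, which fails cell-by-cell at reflex corners and needs a slightly more global argument. In the paper this burden is carried by the connectedness of $I(J)$ as an Alexandrov subspace and the arc-modification in the proof of Theorem~\ref{teo:cuadrosrosenfeld}.
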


\begin{figure}[htp]
    \centering
    \includegraphics[width=12cm]{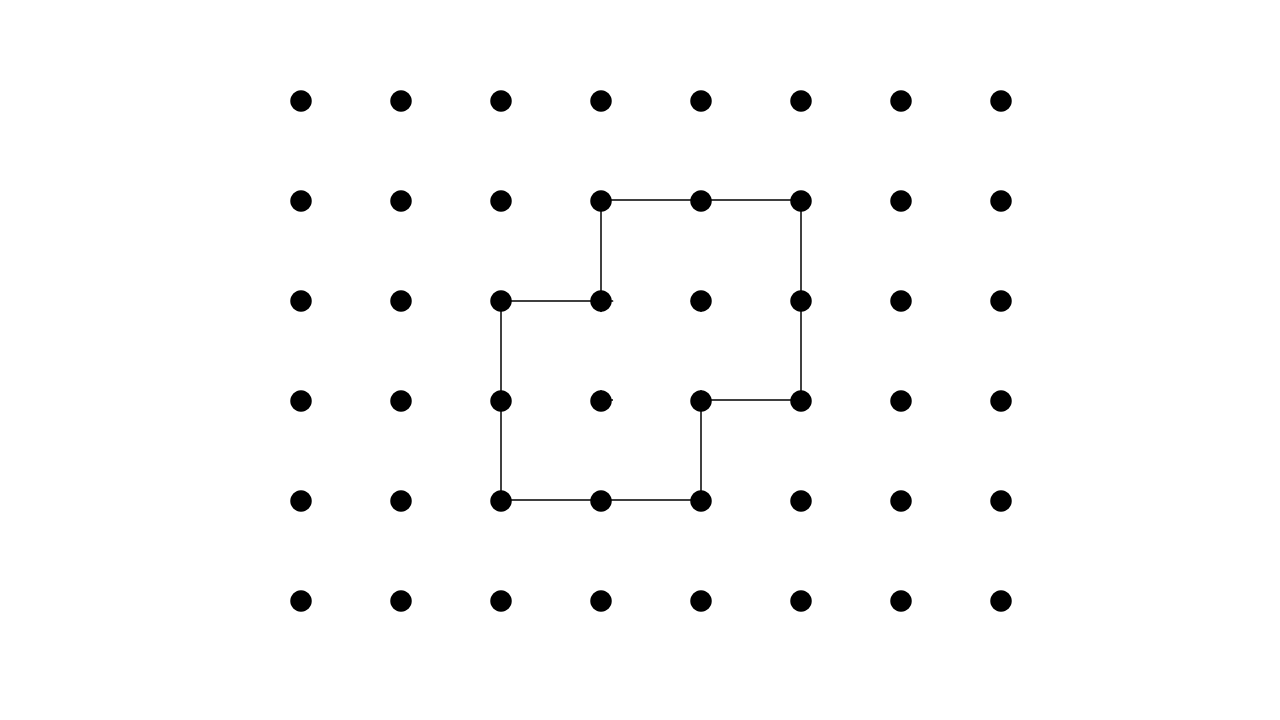}
    \caption{A simple closed $4$-path in $\mathbb{Z}^2$.}
    \label{fig:4-curva}
\end{figure}

In the same decade, Efim Khalimsky developed a different approach  for studying topological properties of the sets $\mathbb{Z}$ and $\mathbb{Z}^{2}$ \cite{Khalimsky1, Khalimsky2}, endowing them with topologies that captured the proximity of its elements. These sets, together with the topologies proposed by Khalimsky, are known as the digital line (or Khalimsky line) and the digital plane (or Khalimsky plane), respectively. Khalimsky also proved a Jordan curve theorem for the digital plane (see \cite{Khalimsky1, Khalimsky2}). We explain this theorem later, in Section~\ref{sec:preliminares} ( Theorem~ \ref{teo:curvadejordanplanodigital}).

There are other versions of Jordan curve theorems for  grids of points that are not the usual squared grid. For example, the case where the points are configured in a hexagonal grid has been studied in \cite[Theorem 26]{Kopperman} and   \cite{Kong2}. In \cite{Neumann-Lara}, V. Neumann-Lara and R. Wilson presented an analogue to the Jordan curve theorem in the context of graph theory (see Theorem~\ref{teo:jordangraficas}). This result will be a key tool for the proof of our main theorem.

One way to define the topology of the digital plane is by an equivalence relation in $\mathbb{R}^2$ resulting of identifying the points in the same edge or the same face of a tiling of the plane by squares of the same size (see equation~\ref{e: square surjection} in Section~\ref{sec:preliminares}). This approach for describing the topology of the digital plane leads us to question if it is possible to generalize Khalimsky's Jordan curve theorem for every quotient space of $\mathbb R^2$ obtained by doing a similar identification in any nice enough tiling. Our main result, Theorem \ref{teo:jordanteselaciones},  states that this generalization is possible if the tiling is locally finite.

Our paper is organized as follows. In Section~\ref{sec:preliminares} we recall all basic notions concerning Alexandrov spaces and tilings. In Section~\ref{sec_ Digital Topology of a tiling}, we 
introduce what the digital version of a tiling is and we prove some basic properties of these topological spaces. In Section~\ref{Sec:main theorem}, we introduce the definition of a digital Jordan curve and we prove our version of the Jordan curve theorem for tilings (Theorem~\ref{teo:jordanteselaciones}). To finish the paper, in Section~\ref{sec:open and closed} we explore some basic properties of closed and open digital Jordan curves, that will be used in Section~\ref{Sec:Well behaved} to obtain conditions in order to guarantee that  a digital Jordan curve encloses a face of the tiling.

\section{Preliminaries}\label{sec:preliminares}

\subsection{Alexandrov Topologies}

Let $(X,\tau)$ be a topological space. The topology $\tau$ is called an \textit{Alexandrov topology} if the intersection of an arbitrary family of open sets is open.  In this case we say that $(X,\tau)$ is an \textit{Alexandrov discrete space}.

It is not difficult to see that $(X,\tau)$ is an Alexandrov discrete space  iff each point $x\in X$ has a smallest neighborhood, which we denote by $N(x)$. Clearly, $N(x)$ is the intersection of all open neighborhoods of $x$ and therefore $N(x)$ is open. 
Also, every subspace of an Alexandrov space is  an Alexandrov space. 

The first important example of an Alexandrov space concerning the contents of this work is the digital line, also known as the Khalimsky line. 

\begin{example}
For every $n\in\mathbb{Z}$, let \[   
N(n) = 
     \begin{cases}
       \{n\} &\quad\text{if $n$ is odd,}\\
       \{n-1, n, n+1\} &\quad\text{if $n$ is even.}\\
       
     \end{cases}
\]
Then the set of integers $\mathbb{Z}$, together with the topology $\tau_{K}$ given by the base $\mathcal{B}=\{N(n)\mid n\in\mathbb{Z}\}$ is known as the \textit{digital line}. 
\end{example}

 Another important example is the digital plane, also known as the Khalimsky plane. 
 
 \begin{example}
The \textit{Khalimsky plane} (or \textit{digital plane}) is the product space of the Khalimsky line $(\mathbb Z,\tau_K)\times(\mathbb Z,\tau_{K})$. This product topology coincides with the quotient topology generated by the surjection $p:\mathbb{R}^{2}\rightarrow\mathbb{Z}^{2}$ defined by:

\begin{equation}\label{e: square surjection}
p(x,y) = 
     \begin{cases}
       (2n+1,2m+1) &\quad\text{if there are $n,m\in\mathbb{N}$ such that} \\ &\quad(x,y)\in(2n,2n+2)\times(2m,2m+2),\\
       \\
        (2n,2m+1) &\quad\text{if there are $n,m\in\mathbb{N}$ such that} \\ &\quad x=2n,\hspace{.2cm}y\in(2m,2m+2),\\
       \\
        (2n+1,2m) &\quad\text{if there are $n,m\in\mathbb{N}$ such that} \\ &\quad x\in(2n,2n+2),\hspace{.2cm}y=2m,\\
       \\
        (2n,2m) &\quad\text{if there are $n,m\in\mathbb{N}$ such that} \\ &\quad x=2n,\hspace{.2cm}y=2m.\\
     \end{cases}
\end{equation}  

 It is worth highlighting that the quotient map $p$ induces an equivalence relation that identifies the interior, the edges (without vertices) and the vertices of the rectangles $[2n,2n+2]\times[2m,2m+2]$ in the plane. Thus, another way to understand the topology of the digital plane is to think of this space as the set of equivalence classes of this equivalence relation together with the quotient topology induced by $\mathbb{R}^{2}$. This description of the digital plane is depicted in Figure \ref{fig:planodigitalreales} . As we already mentioned, the digital plane is an Alexandrov space which can be readily verified by noting that

\[
N(n,m) = 
     \begin{cases}
       \{(n,m)\} &\quad\text{if $2\not\vert n$, $2\not\vert m$}\\
       \{n-1, n, n+1\}\times\{m-1, m, m+1\} &\quad\text{if $2\vert n$, $2\vert m$}, \\
       \{n\}\times\{m-1, m, m+1\} &\quad\text{if $2\not\vert n$, $2\vert m$}, \\
       \{n-1, n, n+1\}\times  \{m\}&\quad\text{if $2\vert n$, $2\not\vert m$}. \\
     \end{cases}
\]

\begin{figure}[htp]
    \centering
    \includegraphics[width=12cm]{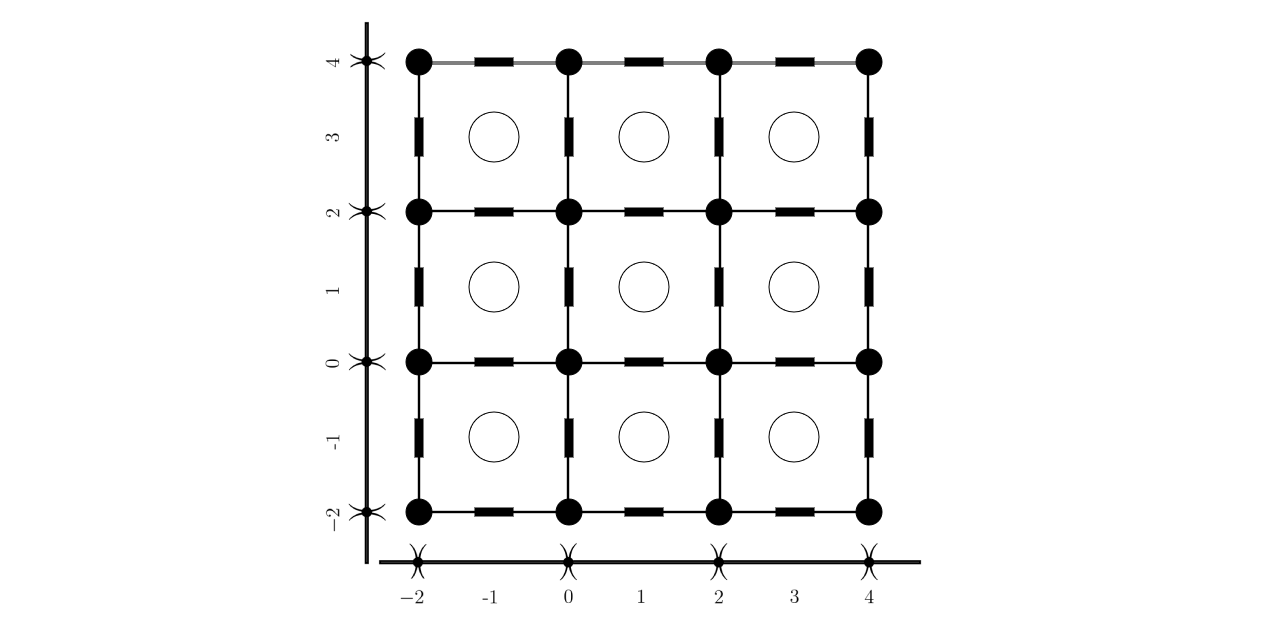}
    \caption{The digital plane as equivalence classes of $\mathbb{R}^{2}$.}
    \label{fig:planodigitalreales}
\end{figure}

\end{example}

An important tool to understand the connectedness of an Alexandrov space is its connectedness graph. 
\begin{definition}\label{graficadeconexidad}
Let $X$ be a topological space. The connectedness graph of $X$ is the graph $G=(V_{G},E_{G})$ where $V_{G}=X$ and $\{x,y\}\in E_{G}$ if and only if $\{x,y\}$ is connected.
\end{definition}
We refer the reader to \cite{Bondy} and \cite{Diestel} for any unknown notion concerning graph theory. 

Let $X$ be a topological space. For every $x\in X$ the  \textit{adjacency set} of $x$ is the set $$\mathscr{A}(x)=\big\{y\in X~\vert~x\neq y,\hspace{.1cm}\{x,y\}\hspace{.1cm}\text{is connected}\big\}.$$

Observe that $x\in\mathscr{A}(y)$ if and only if $y\in\mathscr{A}(x)$. In this case we say that $x$ and $y$ are adjacent. 

Given two points $x,y\in X$,  we define a \textit{digital path} from $x$ to $y$ as a finite sequence of elements of $X$,  $(x_{0}, x_{1}, \dots, x_{n})$, such that $x=x_{0}$, $y=x_{n}$ and for every $i\in\{0, 1, \dots, n-1\}$, $x_{i}$ and $x_{i+1}$ are adjacent. We say that $X$ is \textit{digitally pathwise connected} if for every $x$, $y\in X$ there is a digital path from $x$ to $y$. Lastly, a digital path $(x_{0}, \dots, x_{n})$ is a \textit{digital arc} if $n=1$ or if there is a homeomorphism from a finite interval $I$ of the digital line into $\{x_{0}, \dots, x_{n}\}$. We say that $X$ is \textit{digitally arcwise connected} if for every $x$, $y\in X$ there is a digital arc from $x$ to $y$.

From the definition of the connectedness graph and the definition of a digital path it is clear that $(x_0, \dots, x_n)$ is a digital path in $X$ if and only if $(x_0, \dots, x_n)$ is a path in the connectedness graph $G$ of $X$. The following remark specifies the type of subgraphs in the connectedness graph of $X$ corresponding to digital arcs. For a proof of the non-trivial implication, see e.g. \cite[Theorem 5.6]{Melin}.

\begin{remark}\label{obs:arcocamino}
Let $X$ be a topological space. Then $(x_0, x_{1}, \dots, x_{n})$ is a digital arc if and only if $(x_0, x_{1}, \dots, x_{n})$ is an induced simple path in the connectedness graph of $X$.
\end{remark}

The following theorem summarizes important results about connectedness in  Alexandrov discrete spaces. Its proof can be found in   \cite[Theorem 3.2]{Khalimsky3} and  \cite[Lemma 20]{Kopperman}.

\begin{theorem}\label{teo:resumenconexidad}
Let $X$ be an Alexandrov discrete space.
\begin{enumerate}[\rm(1)]
    \item For every $x\in X$, $\mathscr{A}(x)=\big(N(x)\cup\overline{\{x\}}\big)\setminus\{x\}.$
    \item The following are equivalent:
    \begin{enumerate}[\rm a)]
        \item X is connected.
        \item X is digitally pathwise connected.
        \item X is digitally arcwise connected.
        \item The connectedness graph of $X$ is connected.
    \end{enumerate}
    \item The connected components of $X$ are open and closed.
\end{enumerate}
\end{theorem}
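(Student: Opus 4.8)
The plan is to prove (1) first, since it is the bridge between the topological notion of connectedness and the combinatorial adjacency structure, and then to deduce (2) and (3) from it. For (1), I would start from the elementary fact that a two-point space $\{x,y\}$ is disconnected precisely when both singletons are relatively open in it. Because $X$ is Alexandrov, $\{x\}$ is relatively open in $\{x,y\}$ if and only if some open set contains $x$ but not $y$, which happens exactly when $y\notin N(x)$; symmetrically $\{y\}$ is relatively open iff $x\notin N(y)$. Hence $\{x,y\}$ is connected iff $y\in N(x)$ or $x\in N(y)$. I would then record the duality $x\in N(y)\iff y\in\overline{\{x\}}$, valid because $N(y)$ is the intersection of all open neighborhoods of $y$, so that $y\in\overline{\{x\}}$ means $x$ lies in every open neighborhood of $y$, i.e.\ $x\in N(y)$. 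Substituting this into the condition gives $\mathscr{A}(x)=\big(N(x)\cup\overline{\{x\}}\big)\setminus\{x\}$.

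For (2), the equivalence (b)$\iff$(d) is immediate from the observation recorded just before the statement, since digital paths are exactly paths in the connectedness graph. The implication (c)$\implies$(b) is trivial, as every digital arc is in particular a digital path. For (b)$\implies$(c) I would use that a shortest path in the connectedness graph between two vertices is chordless, hence an induced simple path, which is a digital arc by Remark~\ref{obs:arcocamino}. The genuinely topological equivalence is (a)$\iff$(b). For (b)$\implies$(a): given any separation $X=U\cup V$ into disjoint nonempty open sets, a digital path from a point of $U$ to a point of $V$ must contain an adjacent pair $\{x_i,x_{i+1}\}$ split between $U$ and $V$, contradicting the connectedness of $\{x_i,x_{i+1}\}$. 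For (a)$\implies$(b) I would show each digital-path-component $C$ is clopen: if $x\in C$, then by (1) every $y\in N(x)$ lies in $\mathscr{A}(x)$, so $N(x)\subseteq C$ and $C$ is open; its complement is a union of other such components and hence also open, so $C$ is clopen, and connectedness forces $C=X$.

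For (3), I would first upgrade the previous observation to local connectedness: for each $x$, the minimal neighborhood $N(x)$ is connected, because by (1) every point of $N(x)$ is adjacent to $x$, so $N(x)$ is digitally pathwise connected and therefore connected by (2), applied to the subspace $N(x)$, which is again Alexandrov. Consequently, if $C$ is a connected component and $x\in C$, then $N(x)$ is a connected set meeting $C$, so $N(x)\subseteq C$; thus $C=\bigcup_{x\in C}N(x)$ is open. Closedness is the general fact that the closure of a connected set is connected, whence $\overline{C}=C$ by maximality of $C$.

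Finally, I would flag the main obstacle. The only step carrying real content is (1); everything downstream is an assembly of (1) with standard topological and graph-theoretic facts. So the crux is getting the two-point connectedness analysis exactly right and correctly invoking the minimal-neighborhood/closure duality, after which the local connectedness argument and the clopen-component arguments for (2) and (3) follow uniformly.
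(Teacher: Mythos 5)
Your proof is correct. The paper does not prove this theorem itself but cites it to \cite{Khalimsky3} and \cite{Kopperman}, and your argument is essentially the standard one found there: the two-point characterization of adjacency via $N(x)$ and $\overline{\{x\}}$ in part (1), followed by the clopen-path-component argument for (2) and the connectedness of minimal neighborhoods for (3).
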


Finally, we enunciate Khalimsky's Jordan curve theorem for the digital plane. First, we say that a subset $J$ of the digital plane is a \textit{Jordan curve in the digital plane} if  for every $j\in J$ the complement $J\setminus\{j\}$ is a digital arc, and $\vert J\vert\geq4$. For a detailed exposition of this theorem, see  \cite{Khalimsky3}.

\begin{theorem}\label{teo:curvadejordanplanodigital}
Let $J$ be a Jordan curve in the digital plane. Then $\mathbb{Z}^{2}\setminus J$ has exactly two connected components.
\end{theorem}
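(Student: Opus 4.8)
The plan is to turn the digital curve $J$ into a genuine simple closed curve $\widetilde J\subset\mathbb R^2$, apply the classical Jordan curve theorem to $\widetilde J$, and then transfer the resulting partition of $\mathbb R^2\setminus\widetilde J$ back to a partition of $\mathbb Z^2\setminus J$. First I would recast $J$ in graph-theoretic terms. Since each $J\setminus\{j\}$ is a digital arc it is finite, so $J$ is finite with $|J|\ge 4$, and by Remark~\ref{obs:arcocamino} each $J\setminus\{j\}$ is an induced simple path in the connectedness graph $G$ of the digital plane; this forces $J$ to be an induced cycle of $G$. I would then make the adjacency relation explicit from Theorem~\ref{teo:resumenconexidad}(1) together with the formula for $N(n,m)$: a point with both coordinates odd (a \emph{face}) or both even (a \emph{vertex}) is adjacent to all eight of its surrounding points, whereas a \emph{mixed} point (an \emph{edge}) is adjacent only to its four axis-neighbors. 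The single feature that drives the whole argument is that two mixed points are \emph{never} adjacent; equivalently, the only diagonal adjacencies are those between a face and an incident vertex.

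Next I would realize $J$ geometrically. Identifying the digital point $(n,m)$ with the lattice point $(n,m)\in\mathbb R^2$ (the center of its face, the midpoint of its edge, or the vertex itself), I would let $\widetilde J$ be the closed polygonal curve obtained by joining consecutive points of the cycle $J$ by straight segments. Because adjacent points differ by at most one in each coordinate, the interior of every such segment contains no lattice point, so distinct segments can meet only at shared endpoints. Hence the only way $\widetilde J$ could self-intersect is a crossing of two diagonal segments inside one unit cell; but the two diagonals of a unit cell join, respectively, a face to a vertex and one mixed point to another, and the second is \emph{not} a segment of $\widetilde J$ precisely because mixed points are non-adjacent. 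Thus $\widetilde J$ is a simple closed curve, and the classical Jordan curve theorem gives exactly two components $\Int(\widetilde J)$ and $\Ext(\widetilde J)$ of $\mathbb R^2\setminus\widetilde J$, the former bounded and the latter unbounded.

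Finally I would transfer this back. A short check shows that the open part of each segment of $\widetilde J$ lies inside the cell of its higher-dimensional endpoint, so $\widetilde J$ is contained in the union of the cells of the points of $J$; since the cells are pairwise disjoint, every cell of a point of $\mathbb Z^2\setminus J$ misses $\widetilde J$ and therefore lies entirely in $\Int(\widetilde J)$ or entirely in $\Ext(\widetilde J)$. This splits $\mathbb Z^2\setminus J$ into two sets $P_{\mathrm{in}}$ and $P_{\mathrm{out}}$. I would verify that these are exactly the connected components: they cannot be adjacent (an adjacency would be realized by a segment crossing $\widetilde J$); $P_{\mathrm{out}}$ is nonempty because $\Ext(\widetilde J)$ contains faces far from $J$, and $P_{\mathrm{in}}$ is nonempty because $|J|\ge4$ guarantees that $\widetilde J$ is a nondegenerate curve enclosing at least one face; and each of $P_{\mathrm{in}}$, $P_{\mathrm{out}}$ is digitally connected because $\Int(\widetilde J)$ and $\Ext(\widetilde J)$ are path-connected open sets and a path avoiding $\widetilde J$ can be pushed to a digital path through the cells it meets. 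Passing between topological and graph connectedness via Theorem~\ref{teo:resumenconexidad}(2) then yields exactly two components.

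The step I expect to be the main obstacle is the last one---showing that $P_{\mathrm{in}}$ and $P_{\mathrm{out}}$ are each connected and that $P_{\mathrm{in}}$ is nonempty: one must argue that a continuous path in $\Int(\widetilde J)$ can be chosen so as to cross cell boundaries one incidence at a time and to avoid the cells of $J$ (whose faces may protrude into the interior), so that it induces a bona fide digital path. A cleaner route that sidesteps these hand-made connectedness arguments is to invoke a Jordan curve theorem for plane graphs (such as the Neumann-Lara--Wilson theorem) applied to the induced cycle $J$ in the planar graph $G$, which packages exactly the ``exactly two components'' conclusion.
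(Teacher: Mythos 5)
Your strategy is the classical ``digitization'' route---realize $J$ as a polygonal simple closed curve $\widetilde J$, apply the topological Jordan curve theorem, and pull the partition back to $\mathbb Z^2$---which is genuinely different from the paper's: the paper does not prove Theorem~\ref{teo:curvadejordanplanodigital} directly (it cites Khalimsky--Kopperman--Meyer), and its own machinery obtains it as the special case of Theorem~\ref{teo:jordanteselaciones} for the square tiling, via the Neumann-Lara--Wilson theorem applied to the planar, locally Hamiltonian connectedness graph. The part of your argument up to the simplicity of $\widetilde J$ is correct and complete: the fact that mixed points are never adjacent rules out the only possible crossing (the two diagonals of a unit cell), and your verification that $\widetilde J$ lies in the union of the cells of the points of $J$, so that the cell of every point of $\mathbb Z^2\setminus J$ falls wholly inside or wholly outside $\widetilde J$, is sound. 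Likewise, two adjacent points of $\mathbb Z^2\setminus J$ are joined by a segment whose open part lies in the cell of one of them, so the segment avoids $\widetilde J$ and the two points lie in the same component of $\mathbb R^2\setminus\widetilde J$; this correctly shows that no connected component of $\mathbb Z^2\setminus J$ meets both $P_{\mathrm{in}}$ and $P_{\mathrm{out}}$.

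The two steps you defer are, however, genuine gaps, and they are where all the difficulty of the theorem lives. First, $P_{\mathrm{in}}\neq\emptyset$ does not follow from $|J|\ge4$ alone. Since the lattice points on $\widetilde J$ are exactly the points of $J$, one has $P_{\mathrm{in}}=\Int(\widetilde J)\cap\mathbb Z^2$, and Pick's theorem only gives $\mathrm{Area}(\widetilde J)\ge 1$; a lattice polygon of area $1$ with four boundary lattice points can perfectly well have empty interior lattice set, so ruling this out must use the induced-cycle structure (for instance, take a point of $J$ with maximal second coordinate, analyze the admissible positions of its two cycle-neighbours---note that a mixed point of $J$ always has its two neighbours opposite each other---and exhibit a lattice point directly below it that is off $J$ and inside $\widetilde J$). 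Second, the connectedness of $P_{\mathrm{in}}$ and $P_{\mathrm{out}}$ cannot be obtained merely by ``pushing a path onto the cells it meets'': a path in $\Int(\widetilde J)$ may be forced through the cell of a face belonging to $J$ (such cells protrude into the interior), so the induced sequence of digital points need not avoid $J$, and rerouting around these protrusions requires a local analysis at each point of $J$ that is essentially a lemma of its own. Both gaps are fixable---this is how Rosenfeld-type proofs actually proceed---but as written the proposal establishes only that every component of $\mathbb Z^2\setminus J$ lies entirely in $P_{\mathrm{in}}$ or entirely in $P_{\mathrm{out}}$, which bounds the number of components neither below by two nor above by two. Your closing remark is the right instinct: feeding the induced cycle $J$ into Theorem~\ref{teo:jordangraficas}, as the paper does for arbitrary tilings, is precisely what packages these two hard steps.
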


\subsection{Tilings}\label{sec:teselacionesdelplano}
To finish this section, let us recall some basic notions about tilings.

\begin{definition}
Let $\mathcal{T}=\{T_{n}\}_{n\in\mathbb{N}}$ be a countable family of closed, connected subsets of $\mathbb{R}^{2}$. We say that $\mathcal T$ is a \textit{tiling} of $\mathbb R^2$ if:
\begin{enumerate}[\rm(a)]
    \item $\bigcup_{n\in\mathbb{N}}T_{n}=\mathbb{R}^{2}$,
    \item if $i\neq j$, then $\Int{(T_{i})}\cap\Int{(T_{j})}=\emptyset$.
\end{enumerate}
In this case every element $T_n\in\mathcal T$ is called a \textit{tile}.
\end{definition}

Given a tiling $\mathcal{T}=\{T_{n}\}_{n\in\mathbb{N}}$ of $\mathbb R^2$, the points in the plane that are elements of three or more tiles are called \textit{vertices}. If $i,j\in\mathbb{N}$, $i\neq j$, satisfy that $$E_{i,j}:=\{u\in\mathbb{R}^{2}\mid u\in T_{i}\cap T_{j},\;u \text{ is not a vertex}\}\neq\emptyset,$$ then each connected component of the set $E_{i,j}$ is called an \textit{edge}. %$$\{u\in\mathbb{R}^{2}\mid u\in T_{i}\cap T_{j},\;u\text{ is not a vertex}\}$$
 Lastly, the interior of a tile is called a \textit{face}. If a vertex, an edge or a face are subsets of a tile $T$, we say that it is a vertex, edge or face of the tile, respectively. 

As we mentioned in the introduction, for the purposes of this work, we limit our attention to a particular class of tilings, namely tilings that, besides (a) and (b) of the preceding definition, satisfy the following conditions:
\textit{
\begin{enumerate}[\rm(a)]
    \setcounter{enumi}{2}
    \item $\mathcal{T}$ is a locally finite collection (namely, each point of the plane has a neighborhood that only intersects a finite number of elements of $\mathcal{T}$),
    \item for each $n\in\mathbb{N}$, $T_{n}$ is homeomorphic to the closed unit disk,
    \item each edge of the tiling is homeomorphic to the interval $(0,1)$,
     \item if two different tiles intersect each other, then their intersection is the disjoint union of a finite set of vertices and edges.
\end{enumerate}
}

Consequently, from this point onwards, when we talk about tilings, we will refer to tilings that satisfy these requirements. 

\begin{example}\label{ej:teselacionplano}
The family $\big\{[2n,2n+2]\times[2m,2m+2]\hspace{.2cm}\big\vert\hspace{.2cm}(n,m)\in\mathbb{Z}^{2}\big\}$ is a tiling of the plane. 
\end{example}

In the following proposition we encapsulate basic properties of tilings that can be easily proved from the definition.

\begin{proposition}\label{p:basic tilings}
Let $\mathcal{T}$ be a tiling. 
\begin{enumerate}[\rm(1)]
    \item If $S$ and $T$ are two tiles such that $S\cap T\neq\emptyset$, then $S\cap T=\partial S\cap\partial T$.
    \item Every tile only intersects a finite number of tiles.
    \item Every tile has at least two vertices.
    \item Every tile has the same number of vertices and edges. Moreover, the boundary of a tile is the disjoint union of alternating vertices and edges, and the boundary of every edge is the set of  two vertices that surround it. 
\end{enumerate}
\end{proposition}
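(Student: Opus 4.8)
The plan is to prove each of the four statements in Proposition~\ref{p:basic tilings} in order, using the six defining conditions (a)--(f) of a tiling together with elementary point-set topology in the plane.

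For statement (1), I would argue that if $S$ and $T$ are distinct tiles with $S\cap T\neq\emptyset$, then no interior point can be shared: if $x\in\Int(S)\cap T$, then since $\Int(S)$ is open and disjoint from $\Int(T)$ by condition (b), the point $x$ would have to lie in $T\setminus\Int(T)=\partial T$; but then a small neighborhood of $x$ contained in $\Int(S)$ would meet $\Int(T)$ (as $x\in\partial T$ means every neighborhood of $x$ meets $\Int(T)$, using that $T$ is a disk so $\partial T=\partial(\Int T)$), contradicting (b). Hence $S\cap T\subseteq\partial S$, and by symmetry $S\cap T\subseteq\partial T$, giving $S\cap T\subseteq\partial S\cap\partial T$; the reverse inclusion is immediate.

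For statement (2), I would invoke local finiteness (condition (c)) together with compactness of the tile: since $T$ is homeomorphic to the closed disk it is compact, so it is covered by finitely many of the neighborhoods promised by (c), each meeting only finitely many tiles; hence $T$ itself meets only finitely many tiles. For statements (3) and (4), which concern the combinatorial structure of a single tile's boundary, I would use condition (d) to identify $\partial T$ with a topological circle $S^1$. By condition (f), each tile $S$ that meets $T$ contributes to $\partial T$ a finite disjoint union of vertices and edges, and by statement (2) there are only finitely many such $S$; thus $\partial T$ is covered by finitely many vertices and finitely many edges. Each edge is homeomorphic to $(0,1)$ by condition (e), so it is a connected open arc of the circle $\partial T$, and the vertices are the points of $\partial T$ that lie in three or more tiles. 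The key point is that removing the (finitely many) vertices from the circle $\partial T$ leaves a disjoint union of open arcs, each of which must be a single edge; the closure of each such edge adds exactly its two endpoints, which are vertices by the definition of edge-boundary, establishing that vertices and edges alternate around the circle and that their counts are equal. Statement (3) then follows since a circle cannot be a single edge (an edge is homeomorphic to $(0,1)$, not $S^1$), forcing at least two vertices.

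I expect the main obstacle to be statement (4), specifically the careful verification that the complement of the vertex set in $\partial T\cong S^1$ decomposes precisely into the edges, with each boundary arc being a full edge rather than a proper piece of one. The subtlety is ruling out the degenerate possibilities: that an ``edge'' could fail to be a maximal arc, or that two distinct edges could share a boundary point that is not a vertex. Here I would lean on condition (f), which guarantees that the intersection of $T$ with any neighboring tile is a \emph{disjoint} union of vertices and edges, so distinct edges on $\partial T$ cannot abut without a vertex between them; and on the definition of an edge as a connected component of $E_{i,j}$, which makes each edge maximal among non-vertex points shared by a fixed pair of tiles. Combining these with the observation that every non-vertex boundary point lies in exactly two tiles (by the definition of vertex) pins down the decomposition uniquely and yields the alternating structure.
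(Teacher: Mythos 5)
The paper never actually proves this proposition --- it is stated with the remark that these properties ``can be easily proved from the definition'' --- so there is no argument of the authors' to compare yours against, and I can only judge your outline on its own terms. Parts (1) and (2) are correct (for (1), the fact $\partial T=\partial(\Int T)$ that you invoke is exactly where invariance of domain enters, via $\Int T=\gamma(\Int\mathbb{B}^{2})$ for a homeomorphism $\gamma:\mathbb{B}^{2}\to T$), and your decomposition of $\partial T\setminus\mathcal{V}_{\mathcal{T}}$ into relatively open-and-closed edges is the right mechanism for the alternation claim in (4). One small omission: when you say a non-vertex point of $\partial T$ lies in \emph{exactly} two tiles ``by the definition of vertex'', the definition only bounds the number from above; the lower bound --- that every point of $\partial T$ belongs to some tile other than $T$ --- needs conditions (a) and (c): such a point is a limit of points outside $T$, each in some other tile, and local finiteness forces infinitely many of them into a single closed tile $S$, whence the limit lies in $S$. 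This is also what justifies your unstated claim that the finitely many sets $S\cap T$ cover $\partial T$.

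The genuine gap is in (3) and in the ``two vertices surround each edge'' clause of (4): your argument excludes the case of \emph{zero} vertices on $\partial T$ (``a circle cannot be a single edge''), but not the case of exactly \emph{one} vertex. If $\partial T\cap\mathcal{V}_{\mathcal{T}}=\{v\}$, then $\partial T\setminus\{v\}$ is homeomorphic to $(0,1)$, so it could be a single edge without violating condition (e); its closure then adds only one point, so the assertion ``the closure of each such edge adds exactly its two endpoints'' --- which you use to get the alternating structure --- is false in this case, and it is only true once you already know there are at least two vertices, which is what you are trying to prove. Ruling this case out requires an additional idea: if $\partial T\setminus\{v\}$ were a single edge of the pair $(S,T)$, then $\partial T\subseteq S$, hence by part (1) $\partial T\subseteq\partial S$, and since both are topological circles, $\partial T=\partial S$; but then $\Int T$ and $\Int S$ are two distinct (by condition (b)) bounded connected components of $\mathbb{R}^{2}\setminus\partial T$ --- each is clopen in the complement because $\overline{\Int T}\setminus\Int T=\partial T$ --- contradicting the Jordan curve theorem. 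Nothing in your outline substitutes for this step, and it is not a formality: it is the one place in the proposition where a nontrivial topological input about the plane is genuinely needed.
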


From Proposition~\ref{p:basic tilings}-(4), we infer that each edge is determined by two vertices. From its definition, it is clear that each edge is also determined by exactly two faces. Similarly, each face is determined by its boundary, that is, by the vertices and edges of the tile to whom it serves as interior. Also, a vertex is determined both by the faces of the tiles that intersect in it and by the edges that converge in it. In this way we can talk about the faces and edges of a vertex, the vertices and edges of a face, and the faces and vertices of an edge. We denote the sets of faces, edges and vertices of a tiling by $\mathcal{F}_{\mathcal{T}}$, $\mathcal{E}_{\mathcal{T}}$ and $\mathcal{V}_{\mathcal{T}}$, respectively. Lastly, if $v$ is a vertex, we denote the sets of edges and faces of $v$ by $\mathcal{E}_{v}$ and $\mathcal{F}_{v}$, respectively. We can establish analogous notation for the vertices and edges of a face and for the faces and vertices of an edge.

\section{Digital Topology of a Tiling}\label{sec_ Digital Topology of a tiling}

Let $\mathcal{T}$ be a tiling of $\mathbb R^2$. Consider the set  set $\mathcal{D}_{\mathcal{T}}:=\mathcal{F}_{\mathcal{T}}\cup\mathcal{E}_{\mathcal{T}}\cup\mathcal{V}_{\mathcal{T}}$ and the surjection $q:\mathbb R^2\to \mathcal D_{\mathcal T}$ given by
\begin{equation}\label{eq:quotient map}
    q(x)=\begin{cases}
      x,&\text{ if }x\in\mathcal V_{\mathcal T},\\
      E,&\text{ if }x\in E\in\mathcal E_{\mathcal T},\\
      F,&\text{ if }x\in F\in\mathcal F_{\mathcal T}.
    \end{cases}
\end{equation}
Namely, $q$  identifies the points in the same vertex, edge or face.  

\begin{definition}
The set $\mathcal D_{\mathcal T}$ equipped with  the quotient topology induced by $q$ is called the \textit{digital version of $\mathcal{T}$}.
\end{definition}

Let $\mathcal{T}$ be a tiling of $\mathbb R^2$. Clearly we have that the closure (in $\mathbb R^2$) of any vertex of the tiling is the vertex itself. The closure of an edge is the union of the edge and its two vertices. And, the closure of a face is the union of the the face and all  its vertices and edges. This allow us  to describe the closure of any element $x$ in $\mathcal{D}_{\mathcal{T}}$ as follows:

\begin{equation}\label{eq:cerradura} 
\overline{\{x\}} = 
     \begin{cases}
       \{x\} &\quad\text{if $x\in\mathcal{V}_{\mathcal{T}}$,}\\
       \{x\}\cup\mathcal{V}_{x} &\quad\text{if $x\in\mathcal{E}_{\mathcal{T}}$,} \\
       \{x\}\cup\mathcal{V}_{x}\cup\mathcal{E}_{x} &\quad\text{if $x\in\mathcal{F}_{\mathcal{T}}$.}\\
     \end{cases}
\end{equation}

In the following theorem we prove that $\mathcal{D}_{\mathcal{T}}$ is an Alexandrov space, by describing the smallest neighborhood of every element of $\mathcal{D}_{\mathcal{T}}$.

\begin{theorem}\label{teo:teselacionAlexandrov}
Let $\mathcal{T}$ be a tiling. Then $\mathcal{D}_{\mathcal{T}}$ is an Alexandrov space.
Moreover, for every element $x\in\mathcal D_{\mathcal T}$ the smallest neigborhood of $x$ is given by
\[   
N(x) = 
     \begin{cases}
       \{x\}\cup\mathcal{E}_{x}\cup\mathcal{F}_{x}&\quad\text{if $x\in\mathcal{V}_{\mathcal{T}}$,}\\
       \{x\}\cup\mathcal{F}_{x} &\quad\text{if $x\in\mathcal{E}_{\mathcal{T}}$,} \\
       \{x\} &\quad\text{if $x\in\mathcal{F}_{\mathcal{T}}$.}\\
     \end{cases}
\]
\end{theorem}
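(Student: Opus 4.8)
The plan is to prove both assertions of Theorem~\ref{teo:teselacionAlexandrov} simultaneously by first establishing the formula for the candidate smallest neighborhood $N(x)$, and then deducing that $\mathcal{D}_{\mathcal{T}}$ is Alexandrov from the fact that each point possesses such a smallest neighborhood. Recall from the subsection on Alexandrov topologies that a space is Alexandrov precisely when every point has a smallest open neighborhood; hence it suffices to verify that the set $N(x)$ displayed in the statement is open, and that every open set containing $x$ contains $N(x)$. Because the topology on $\mathcal{D}_{\mathcal{T}}$ is the quotient topology induced by $q$, a subset $U\subseteq\mathcal{D}_{\mathcal{T}}$ is open if and only if $q^{-1}(U)$ is open in $\mathbb{R}^2$, so the entire argument reduces to computing preimages under $q$ and checking openness in the plane.

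First I would handle the easy direction, namely that each candidate $N(x)$ is open, by computing $q^{-1}(N(x))$ explicitly in each of the three cases. For a face $F$, the preimage $q^{-1}(\{F\})$ is just the open face $\Int(T)$, which is open in $\mathbb{R}^2$, so $\{F\}$ is open. For an edge $E$, the preimage of $\{E\}\cup\mathcal{F}_E$ is the union of the open edge $E$ together with the two open faces adjacent to $E$; using the hypotheses that each tile is homeomorphic to the closed disk and that two tiles meet in a disjoint union of vertices and edges, this union is exactly an open neighborhood of the open arc $E$ in the plane (a point on $E$ has a small disk meeting only $E$ and the two incident faces). For a vertex $v$, the preimage of $\{v\}\cup\mathcal{E}_v\cup\mathcal{F}_v$ is the union of $\{v\}$ with all open edges and all open faces incident to $v$; by local finiteness only finitely many tiles meet $v$, and the requirement that their pairwise intersections be disjoint unions of vertices and edges guarantees that this union is precisely a small open disk around $v$ minus nothing, hence open. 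In each case I would invoke Proposition~\ref{p:basic tilings} to control which tiles, edges and faces are incident, so that the local picture around each point is fully determined.

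Next I would prove minimality: any open set $U$ containing $x$ must contain all of $N(x)$. The clean way to do this is to observe that the formula for $N(x)$ is forced by the closure formula in equation~\eqref{eq:cerradura} together with the specialization (Alexandrov) order. Concretely, if $y\in N(x)\setminus\{x\}$ then in each case one checks that $x\in\overline{\{y\}}$: a face has its incident edges and vertices in its closure, and an edge has its incident vertices in its closure. By the general fact that $x\in\overline{\{y\}}$ forces every open set containing $x$ to meet $\{y\}$, i.e.\ to contain $y$, we conclude $y\in U$ for every open $U\ni x$. This shows $N(x)\subseteq U$, completing minimality. Alternatively, and perhaps more self-containedly, I would argue directly that if $q^{-1}(U)$ is open and contains a point of the cell $q^{-1}(\{x\})$, then it must contain points of every cell listed in $N(x)$, again because those cells all accumulate onto the cell of $x$ in $\mathbb{R}^2$.

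The main obstacle I anticipate is the first (openness) direction, specifically the vertex case, where one must rule out pathologies in how tiles cluster around $v$. The whole point of imposing conditions (c)--(f) on the tiling is to make the local model around each vertex a finite ``pie'' of faces separated by edges, so that the union of $v$ with its incident open edges and faces fills out a genuine open disk; verifying this carefully is where the geometric hypotheses (local finiteness, each tile a closed disk, each edge an open arc, intersections being finite disjoint unions of cells) all get used at once. I would isolate this as the crux and treat the edge and face cases as easy warm-ups, after which the Alexandrov conclusion follows immediately since we will have exhibited a smallest neighborhood at every point.
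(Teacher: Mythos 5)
Your proposal follows essentially the same route as the paper: establish the displayed set as a smallest neighborhood by (i) showing its $q$-preimage is open in $\mathbb{R}^2$ via local finiteness (the union of the tiles not meeting the relevant cell is closed, so its complement gives the required neighborhoods), and (ii) showing minimality, which you do via the specialization order ($x\in\overline{\{y\}}$ for each $y\in N(x)$) while the paper equivalently uses saturation of $q^{-1}(U)$ together with the same closure facts. The only caveat is your phrasing in the vertex case that the star of $v$ ``is precisely a small open disk'' --- it is merely an open set, not a disk --- but since you explicitly flag this openness verification as the crux to be carried out with the tiling hypotheses, the argument is in substance the paper's own.
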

\begin{proof}
Let $\mathcal{T}$ be a tiling, $q:\mathbb{R}^2\rightarrow\mathcal{D}_{\mathcal{T}}$ the quotient map associated to $\mathcal{D}_{\mathcal{T}}$ and $x\in\mathcal{D}_{\mathcal{T}}$ an arbitrary element.

If $x\in\mathcal{V}_{\mathcal{T}}$ and $U$ is an open neighborhoof of $x$, then $q^{-1}(U)$ is an open, saturated neighborhood of $q^{-1}(x)$. Since $x$ is a vertex, it is contained in the closure of its edges and faces, therefore $q^{-1}(U)$ intersects each element of $\mathcal E_x\cup\mathcal F_x$. Moreover, since $q^{-1}(U)$ is a saturated neighborhood then $q^{-1}(U)$ contains all of the faces and edges of $x$, that is, $q^{-1}\left(\{x\}\cup\mathcal{E}_{x}\cup\mathcal{F}_{x}\right)\subset q^{-1}(U)$. Next we see that $q^{-1}\left(\{x\}\cup\mathcal{E}_{x}\cup\mathcal{F}_{x}\right)$ is an open set. Let $y\in q^{-1}\left(\{x\}\cup\mathcal{E}_{x}\cup\mathcal{F}_{x}\right)$. If $y$ is an element of a face of $x$, then that face is an open set contained in $q^{-1}\left(\{x\}\cup\mathcal{E}_{x}\cup\mathcal{F}_{x}\right)$. If $y$ is an element of an edge $E\in \mathcal E_x$,  then there are two tiles $S, T\in\mathcal T$ that have $x$ as a common vertex and such that $y\in E\subset S\cap T$. Furthermore, for any  other tile $R\in\mathcal{T}\setminus\{S,T\}$, the point $y$ does not belong to $R$. Since $\mathcal{T}\setminus\{T,S\}$ is a locally finite collection of closed sets, its union is a closed subset of $\mathbb{R}^{2}$ (\cite[Theorem 1.1.11.]{Engelking}), thus

$$\mathbb{R}^2\setminus\left(\bigcup_{\substack{R\in\mathcal T\\ R\notin\{S,T\}}}R\right)$$ is an open neighborhood of $y$ contained in $q^{-1}(\{x\}\cup\mathcal{E}_{x}\cup\mathcal{F}_{x})$. Finally, if $y=q^{-1}(x)$ we consider the set $\mathcal{S}$ of tiles that contain $y$, and we recall that $\mathcal{S}$ is a finite set. By the previous argument $\mathbb{R}^2\setminus\left(\bigcup_{T\in(\mathcal{T}\setminus\mathcal{S})}T\right)$ is an open neighborhood of $y$ that is contained in $q^{-1}\left(\{x\}\cup\mathcal{E}_{x}\cup\mathcal{F}_{x}\right)$. This proves that $q^{-1}\left(\{x\}\cup\mathcal{E}_{x}\cup\mathcal{F}_{x}\right)$ is an open set such that $$q^{-1}\left(\{x\}\cup\mathcal{E}_{x}\cup\mathcal{F}_{x}\right)\subset q^{-1}(U).$$ Since $U$ was an arbitrary open neighborhood of $x$ in $\mathcal{D}_{\mathcal{T}}$ we conclude that $\left(\{x\}\cup\mathcal{E}_{x}\cup\mathcal{F}_{x}\right)$ is the smallest neighborhood of $x$ in $\mathcal{D}_{\mathcal{T}}$.

If $x\in\mathcal{E}_{\mathcal{T}}$ and $U$ is an open neighborhood of $x$, then $q^{-1}(U)$ is an open, saturated neighborhood of $q^{-1}(x)$. Since $x$ is an edge we know that it is contained in the closure of its faces, so $q^{-1}(U)$ intersects  the two elements of $\mathcal F_{x}=\{F_1,F_2\}$. Moreover, since $q^{-1}(U)$ is a saturated neighborhood then $q^{-1}(U)$ contains $F_1\cup F_2$. Thus, $q^{-1}\left(\{x\}\cup\mathcal{F}_{x}\right)\subset q^{-1}(U)$. In the same vein as the previous case, we can see that $q^{-1}\left(\{x\}\cup\mathcal{F}_{x}\right)$ is open in $\mathbb{R}^{2}$. Since $U$ is an arbitrary open neighborhood of $x$ in $\mathcal{D}_{\mathcal{T}}$, we have that $\left(\{x\}\cup\mathcal{F}_{x}\right)$ is the smallest neighborhood of $x$ in $\mathcal{D}_{\mathcal{T}}$.

In order to complete the proof, we see that if $x\in\mathcal{F}_{\mathcal{T}}$, then $\{x\}$ is open in $\mathcal{D}_{\mathcal{T}}$ since $x$ is the interior of a tile in $\mathbb{R}^{2}$.  
\end{proof}

We infer from theorems~\ref{teo:resumenconexidad} and \ref{teo:teselacionAlexandrov}, in combination with equality~\ref{eq:cerradura}, that the  adjacency set of any element $x\in \mathcal{D}_{\mathcal{T}}$ is given by

\[   
\mathscr{A}(x) = 
     \begin{cases}
       \mathcal{E}_{x}\cup\mathcal{F}_{x}&\quad\text{if $x\in\mathcal{V}_{\mathcal{T}}$,}\\
       \mathcal{V}_{x}\cup\mathcal{F}_{x} &\quad\text{if $x\in\mathcal{E}_{\mathcal{T}}$,} \\
        \mathcal{V}_{x}\cup\mathcal{E}_{x} &\quad\text{if $x\in\mathcal{F}_{\mathcal{T}}$.}\\
     \end{cases}
\]

This information allows us to describe the connectedness graph of any digital version of a tiling. In Figure \ref{fig:teselacionhexagonal} we illustrate a portion of the tiling of the plane with regular hexagons of the same size as well as the connectedness graph of its digital version. In this graph we depict with white points the vertices of the graph corresponding to the faces of the tiling, with black points the vertices of the graph corresponding to vertices of the tiling, and with black rectangles the vertices of the graph corresponding to the edges of the tiling.

\begin{figure}[htp]
    \centering
    \includegraphics[width=12cm]{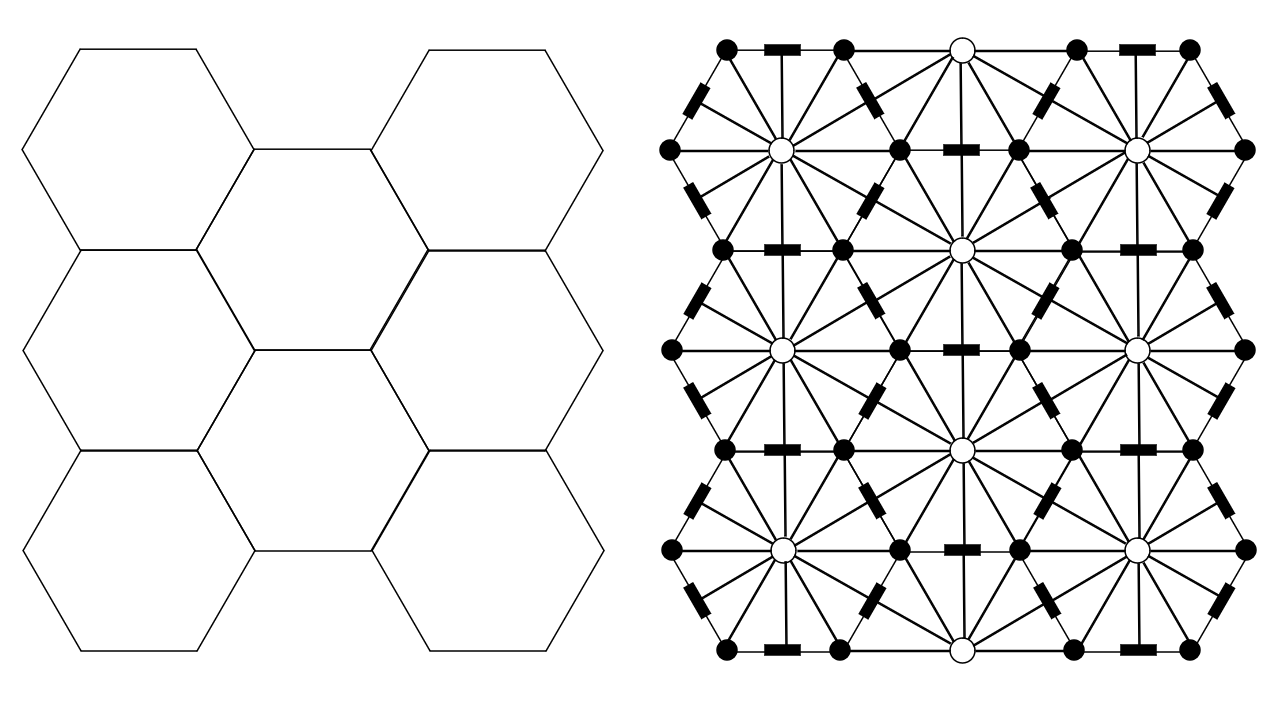}
    \caption{Tiling of the plane with regular hexagons and the connectedness graph of its digital version.}
    \label{fig:teselacionhexagonal}
\end{figure}

In Figure \ref{fig:teselaciondosfiguras} we present another example, illustrating a portion of a tiling of the plane with squares and hexagons, and the connectedness graph of its digital version. The vertices of the graph correspond to the elements of the digital version in the same way as in Figure \ref{fig:teselacionhexagonal}.

\begin{figure}[htp]
    \centering
    \includegraphics[width=11cm]{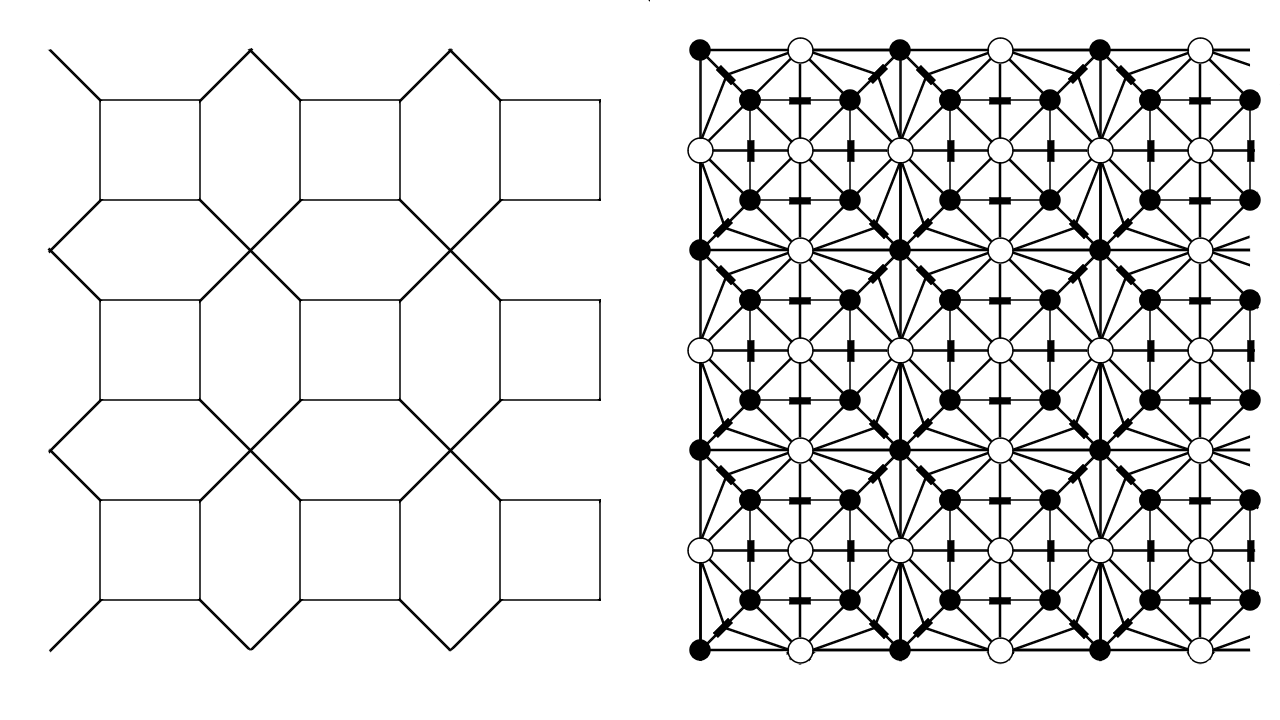}
    \caption{Tiling of the plane with hexagons and squares and the connectedness graph of its digital version.}
    \label{fig:teselaciondosfiguras}
\end{figure}

For the subsequent material, we recall the definition of a planar graph:

\begin{definition}\label{def:graficaplana}
Let $G=(V_G, E_G)$ be a graph. We say that $G$ is a planar graph if there is a function $\psi:V_{G}\cup E_{G}\rightarrow\mathscr{P}(\mathbb{R}^{2})$ such that:
\begin{enumerate}[\rm(1)]
    \item The image of each element of $V_{G}$ is a singleton.
    \item The image of each element of $E_{G}$ is a subset of $\mathbb{R}^{2}$ homeomorphic to $(0,1)$.
    \item For any two elements $x,y\in V_{G}\cup E_{G}$, $x\neq y$, $$\psi(x)\cap\psi(y)=\emptyset.$$
    \item If $v$ and $w$ are the vertices of the edge $e$, then $$\overline{\psi(e)}=\psi(e)\cup\psi(v)\cup\psi(w).$$ 
\end{enumerate} 

In this situation, we say that $\vert G\vert:=\psi(V_{G}\cup E_{G})$ is a planar embedding of $G$. Sometimes the set $|G|$ is also called a \textit{geometric realization} of $G$.

\end{definition}

\begin{lemma}\label{lema:graficaplana}
Let $\mathcal{T}$ be a tiling. The connectedness graph of $\mathcal{V}_{\mathcal{T}}\cup\mathcal{E}_{\mathcal{T}}$, with the subspace topology induced by $\mathcal{D}_{\mathcal{T}}$, is a planar graph.
\end{lemma}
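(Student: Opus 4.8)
The plan is to exhibit an explicit planar embedding $\psi$ built directly from the geometry of the tiling, realizing the connectedness graph $G$ of $Y:=\mathcal{V}_{\mathcal{T}}\cup\mathcal{E}_{\mathcal{T}}$ as a single subdivision of the $1$-skeleton of $\mathcal{T}$.

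First I would pin down $G$. Its vertex set is $V_{G}=\mathcal{V}_{\mathcal{T}}\cup\mathcal{E}_{\mathcal{T}}$. Since connectedness of a two-point set depends only on the topology it inherits, and the subspace topology is transitive, a pair $\{x,y\}\subset Y$ is connected in $Y$ if and only if it is connected in $\mathcal{D}_{\mathcal{T}}$; hence the adjacency relation on $Y$ is just the restriction to $Y$ of the one computed after Theorem~\ref{teo:teselacionAlexandrov}. Discarding the faces from the formulas for $\mathscr{A}$, this yields: a tiling-vertex $v$ is adjacent exactly to the edges in $\mathcal{E}_{v}$, and a tiling-edge $e$ is adjacent exactly to its two vertices $\mathcal{V}_{e}$. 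Thus $G$ is the bipartite incidence graph between $\mathcal{V}_{\mathcal{T}}$ and $\mathcal{E}_{\mathcal{T}}$.

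Next I would define $\psi$ following Definition~\ref{def:graficaplana}. For a graph-vertex that is a tiling-vertex $v$, set $\psi(v)=\{v\}\subset\mathbb{R}^{2}$. For a graph-vertex that is a tiling-edge $e$, I invoke assumption (e) together with Proposition~\ref{p:basic tilings}-(4): the closure $\overline{e}$ in $\mathbb{R}^{2}$ is a compact arc whose endpoints are the two vertices $\mathcal{V}_{e}=\{v_{1},v_{2}\}$, so I fix a homeomorphism $h_{e}\colon[0,1]\to\overline{e}$ with $h_{e}(0)=v_{1}$, $h_{e}(1)=v_{2}$, and put $\psi(e)=\{h_{e}(1/2)\}$. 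Finally, for a graph-edge $\{v_{i},e\}$ joining $e$ to one of its vertices, set $\psi(\{v_{1},e\})=h_{e}((0,1/2))$ and $\psi(\{v_{2},e\})=h_{e}((1/2,1))$, the two open half-edges. I would then verify the four conditions. Conditions (1) and (2) are immediate from the construction. For (4) one computes $\overline{\psi(\{v_{1},e\})}=h_{e}([0,1/2])=h_{e}((0,1/2))\cup\{h_{e}(0)\}\cup\{h_{e}(1/2)\}=\psi(\{v_{1},e\})\cup\psi(v_{1})\cup\psi(e)$, and symmetrically for the other half. Condition (3), pairwise disjointness, uses that distinct edges of a tiling are disjoint subsets of $\mathbb{R}^{2}$ and that no edge contains a vertex: consequently every $\psi(e)$ and every half-edge lies in the interior of a single edge, away from all vertices and all other edges, while the two halves of one edge avoid each other and the midpoint.

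The main obstacle is condition (3) together with the structural input it needs: one must guarantee that the geometric pieces are genuinely pairwise disjoint, which rests on the fact that distinct edges never meet. This in turn follows because a non-vertex point lies in exactly two tiles, so it determines the pair $\{i,j\}$ with $u\in E_{i,j}$ and hence its edge-component uniquely, by axiom (f) and the definition of a vertex. A secondary point requiring care is the claim that $\overline{e}$ is homeomorphic to $[0,1]$ with its two vertices as endpoints; here I would use that $\overline{e}$ is compact (tiles are homeomorphic to disks, so edges are bounded) and that each end of the open arc $e$ accumulates on a single vertex, so that the homeomorphism $e\cong(0,1)$ extends to the endpoints.
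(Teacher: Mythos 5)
Your proposal is correct and follows essentially the same route as the paper: both realize the vertex--edge incidence graph as a subdivision of the $1$-skeleton of the tiling, placing one new point in the interior of each tiling-edge (your $h_e(1/2)$ versus the paper's arbitrary $x_A$) and using the two resulting open sub-arcs as the embedded graph-edges, with disjointness coming from the disjointness of distinct tiling-edges. The only difference is presentational — you parametrize each closed edge by $[0,1]$ while the paper takes connected components of $A\setminus\{x_A\}$ — and both rest on the same structural facts (edges homeomorphic to $(0,1)$, closures adding exactly the two surrounding vertices).
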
 

\begin{proof}
Let $H=(V_H, E_H)$ be the connectedness graph of $\mathcal{V}_{\mathcal{T}}\cup\mathcal{E}_{\mathcal{T}}$. The set of vertices of $H$ is precisely the set $V_{H}=\mathcal{V}_{\mathcal{T}}\cup\mathcal{E}_{\mathcal{T}}$. We know that the edges of $H$ always denote the adjacency between an edge of the tiling and its two vertices. Therefore, every edge in $E_{H}$ is a pair of the form $\{A,z\}$, where $A\in \mathcal E_z\subset \mathcal E_{\mathcal T}$ and $z\in\mathcal V_A\subset \mathcal V_{\mathcal T}$.
For every $A\in\mathcal{E}_{\mathcal{T}}$, pick a point $x_{A}\in A\subset\mathbb R^2$. If $x\in\mathcal{V}_{\mathcal{T}}$ is a vertex of $A\in\mathcal{E}_{\mathcal{T}}$, we define $I_{x_{A},x}$ as the connected component of $A\setminus\{x_{A}\}$ delimited by $x_{A}$ and $x$. It is clear that $\{x_{A},x\}\cap I_{x_{A}, x}=\emptyset$. Since $A$ is an edge of the tiling, then $I_{x_{A},x}$ is homeomorphic to the interval $(0,1)$ and $$\overline{I_{x_{A},x}}=I_{x_{A},x}\cup\{x_{A},x\}.$$

Moreover, if $\{x_{A},x\}\neq\{x_{B},y\}$ then $I_{x_{A},x}\cap I_{x_{B},y}=\emptyset$, considering that any two different edges of the tiling have an empty intersection. Therefore the function $\psi:V_{H}\cup E_{H}\rightarrow\mathscr{P}(\mathbb{R}^{2})$ given by

\[   
\psi(x) = 
     \begin{cases}
       \{x\} &\quad\text{if}\hspace{.2cm} x\in\mathcal{V}_{\mathcal{T}},\\
       \{x_{A}\} &\quad\text{if}\hspace{.2cm} x=A\hspace{.2cm}\text{for some}\hspace{.2cm}A\in\mathcal{E}_{\mathcal{T}}, \\
       I_{x_{A},z} &\quad\text{if}\hspace{.2cm} x=\{A,z\}\in E_{H},\hspace{.2cm}\text{where}\hspace{.2cm}A\in\mathcal{E}_{\mathcal{T}}\hspace{.2cm}\text{and}\hspace{.2cm}z\in\mathcal{V}_{\mathcal{T}},
     \end{cases}
\]

proves the planarity of $H$.
\end{proof}

\begin{proposition}\label{prop:graficaplana}
Let $\mathcal{T}$ be a tiling. The connectedness graph of $\mathcal{D}_{\mathcal{T}}$ is a planar graph.
\end{proposition}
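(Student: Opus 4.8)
The plan is to extend the planar embedding of the subgraph on $\mathcal V_{\mathcal T}\cup\mathcal E_{\mathcal T}$ furnished by Lemma~\ref{lema:graficaplana} by realizing the vertices of the connectedness graph $G$ coming from faces, together with their incident graph-edges, inside the interiors of the corresponding tiles. First I would use the adjacency formula computed just before Definition~\ref{def:graficaplana} to record the exact combinatorial structure of $G$: a face $F$ is adjacent only to its vertices $\mathcal V_F$ and its edges $\mathcal E_F$, and two faces are never adjacent (their singletons are open, so the subspace $\{F,F'\}$ is discrete). Hence the graph-edges of $G$ split into three kinds: the vertex--edge incidences already embedded in the proof of Lemma~\ref{lema:graficaplana}, and the new vertex--face and edge--face incidences, each of the latter joining a graph-vertex that lives on the union of all vertices and edges of the tiling (the $1$-skeleton) to a graph-vertex coming from a face.

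Next, for each face $F$ I would fix, using condition (d), a homeomorphism $h_F$ from the tile $\overline F$ onto the closed unit disk $\mathbb D^2$ and set $\psi(F):=\{x_F\}$, where $x_F:=h_F^{-1}(0)$. By Proposition~\ref{p:basic tilings}-(4) the boundary $\partial F$ is a cycle of alternating vertices and edges; the vertices $v\in\mathcal V_F$ together with the representative points $x_A\in A$ ($A\in\mathcal E_F$) already chosen in the proof of Lemma~\ref{lema:graficaplana} form a finite set of distinct points of $\partial F$, which $h_F$ carries to distinct points of $S^1$. For each such boundary point $p$ I define the arc from $x_F$ to $p$ as the $h_F^{-1}$-image of the straight radius $[0,h_F(p)]$, and I let $\psi$ of the corresponding graph-edge be this arc with its two endpoints deleted. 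These radii are pairwise disjoint except at the center, so the arcs emanating from $x_F$ meet only at $x_F$, and each open arc lies in the open face $F=\Int\overline F$.

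The verification of the four conditions of Definition~\ref{def:graficaplana} is then routine except for the disjointness condition (3), which is where the real content lies. Conditions (1), (2) and (4) are immediate: each new vertex-image is a singleton, each new edge-image is an open radius and hence homeomorphic to $(0,1)$, and the closure of such a radius adds back exactly its two endpoints $x_F$ and $p$, which are precisely the images of the two incident graph-vertices. For (3) the key observation is that every new open arc lies entirely in an open face, whereas the embedding of $H$ together with the images of all graph-vertices $v$ and $x_A$ lies on the $1$-skeleton, which is disjoint from the open faces; moreover two distinct faces have disjoint interiors by axiom (b). Consequently, new arcs in different faces cannot meet, a new arc cannot meet the embedding of $H$ nor any vertex-image on the $1$-skeleton, and two new arcs inside the same face meet only at the common endpoint among $\{x_F,p,p'\}$ that has been deleted.

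I expect the main obstacle to be organizing this disjointness bookkeeping so that all three types of graph-edges are treated uniformly. The two structural facts that make it go through are that the representative point $x_A$ of an edge $A$ lies on the boundary of each of the two faces having $A$ as an edge (so the single graph-vertex $A$ is reached once through each face, from opposite sides), and that the interiors of the tiles are pairwise disjoint, which confines the entire new part of the embedding to the complement of the $1$-skeleton already used by $H$.
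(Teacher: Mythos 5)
Your proposal is correct and follows essentially the same route as the paper: both extend the embedding of the subgraph on $\mathcal{V}_{\mathcal{T}}\cup\mathcal{E}_{\mathcal{T}}$ from Lemma~\ref{lema:graficaplana} by sending each face $C$ to the image of the disk's center under a homeomorphism with the tile and realizing the new graph-edges as images of straight radii ending at the vertices of $C$ and at the chosen representative points $x_A$ of its edges. The disjointness argument --- new arcs confined to pairwise disjoint open faces, away from the $1$-skeleton carrying the old embedding --- is exactly the one the paper uses.
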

\begin{proof}

Let $G=(V_{G},E_{G})$ be the connectedness graph of $\mathcal{D}_{\mathcal{T}}$ and $H=(V_H, E_H)$ the connectedness graph of $\mathcal{V}_{\mathcal{T}}\cup\mathcal{E}_{\mathcal{T}}$. Then $H$ is a subgraph of $G$ and by Lemma \ref{lema:graficaplana} there is a bijection $\psi:V_{H}\cup E_{H}\rightarrow\vert H\vert\subset \mathscr{P}(\mathbb{R}^{2})$ from the set of vertices and edges of $H$ to a planar embedding $\vert H\vert$ of $H$. In order to prove the planarity of $G$, we want to extend the definition of $\psi$ to a function $\phi:V_{G}\cup E_{G}\rightarrow\mathbb R^2$ satisfying conditions (1)-(4) of Definition~\ref{def:graficaplana}. For that purpose, it suffices to define $\phi$ for every element of  $\mathcal{F}_{\mathcal{T}}$ and for every edge of $G$ connecting a face of the tiling with one of its vertices or edges.

Let $C\in\mathcal{F}_{\mathcal{T}}$, we know that there is a unique tile $T$ that has $C$ as a face, and we know that $\mathscr{A}(C)=\mathcal{V}_{C}\cup\mathcal{E}_{C}$. Let $\gamma:\mathbb{B}^{2}\rightarrow T$ be a homeomorphism between $\mathbb{B}^2$, the unit closed disk of $\mathbb R^2$, and $T$. Consider the set $\gamma^{-1}(\mathcal{V}_{C})=\{w_1, w_2, \dots, w_n\}$, which is contained in $\mathbb{S}^{1}$ since $\gamma$ is a homeomorphism and $\mathcal{V}_{C}\subset\partial T$. Without lost of generality we can suppose that $w_{1}, \dots, w_{n}$ are numbered clockwise. Then we can number the set $\gamma^{-1}(\mathcal{E}_{C})=\{B_{1}, \dots,  B_{n}\}$ denoting by $B_{1}$ the arc delimited by $w_{1}$ and $w_{2}$ and then proceeding in a clockwise manner. For every  $i\in\{1, \dots,n\}$, define $A_{i}:=\gamma(B_{i})$, considering that $\gamma$ is a homeomorphism we can assure that $\mathcal{E}_{C}=\{A_{1}, \dots, A_{n}\}$. Being consistent with the notation introduced in Lemma \ref{lema:graficaplana}, we have a point $x_{A_{i}}\in A_{i}$ such that $\psi(A_{i})=\{x_{A_{i}}\}$. Let $b_{i}:=\gamma^{-1}(x_{A_{i}})$, which is an element of $B_i\subset\mathbb{S}^{1}\setminus(\gamma^{-1}(\mathcal{V}_{C}))$.

For every $w\in\{w_{1}, \dots,w_{n}, b_{1}, \dots, b_{n}\}$, let $L_{w}:=\left\{t w~\vert~t\in(0,1)\right\}.$ Now we consider the family  $$\mathcal{L}_{C}:=\left\{L_{w}~\vert~w\in\{w_{1}, \dots,w_{n}, b_{1}, \dots, b_{n}\}\right\},$$
whose elements are pairwise disjoint and they are contained in $\Int{\mathbb{B}^{2}}$. 

Let $x_{C}:=\gamma(0,0)$. Since $(0,0)\in\Int{\mathbb{B}^{2}}$, it follows that $x_{C}\in C$. Each element of the family $\gamma(\mathcal{L}_{C})=\{\gamma(L)~\vert~ L\in\mathcal{L}_{C}\}$ is an  arc delimited by $x_{C}$ and one  of the points in the set $\{v_{1}, \dots,v_{n}, x_{A_{1}}, \dots, x_{A_{n}}\}$, where $v_i=\gamma(w_i)\in\mathcal V_C$. Thus we can denote each arc $\gamma(L)\in\gamma(\mathcal{L}_{C})$ as $M_{x_{C}, v}$, where $v$ is the point in $\{v_{1}, \dots,v_{n}, x_{A_{1}}, \dots, x_{A_{n}}\}$ that, alongside $x_{C}$, delimits $\gamma(L)$. From its definition, it is clear that $M_{x_{C}, v}\cap\{x_{C},v\}=\emptyset$.
These arcs are pairwise disjoint, since $\gamma$ is a homeomorphism and the elements of $\mathcal{L}_{C}$ are pairwise disjoint. Also,  each arc $M_{x_C,v}$ is contained in $C$, since the elements of $\mathcal{L}_{C}$ are contained in $\Int{\mathbb{B}^{2}}$. From here we can infer that  if $D$ is another face of the tiling, then the elements of $\gamma(\mathcal{L}_{C})$ and $\gamma(\mathcal{L}_{D})$ are pairwise disjoint. Furthermore, we can also conclude that the elements of $\gamma(\mathcal{L}_{C})$ do not intersect $\partial T$. Lastly, note that $M_{x_{C}, v}\in\gamma(\mathcal{L}_{C})$ implies $$\overline{M_{x_{C}, v}}=M_{x_{C}, v}\cup\{x_{C},v\}.$$

Therefore the function $\phi:V_{G}\cup E_{G}\rightarrow\mathscr{P}(\mathbb{R}^{2})$ given by

\[   
\phi(x) = 
     \begin{cases}
      \psi(x) &\quad\text{if}\hspace{.2cm} x\in V_{H}\cup E_{H},\\
       \{x_{C}\} &\quad\text{if}\hspace{.2cm} x=C\hspace{.2cm}\text{for some}\hspace{.2cm}C\in\mathcal{F}_{\mathcal{T}}, \\
       M_{x_{C},v} &\quad\text{if}\hspace{.2cm} x=\{C,v\}\in E_{G}\setminus E_{H},\hspace{.2cm}\text{where}\hspace{.2cm}C\in\mathcal{F}_{\mathcal{T}}\hspace{.2cm}\text{and}\hspace{.2cm}v\in\mathcal{V}_{C}, \\
       M_{x_{C}, x_{A}} &\quad\text{if}\hspace{.2cm} x=\{C,A\}\in E_{G}\setminus E_{H},\hspace{.2cm}\text{where}\hspace{.2cm}C\in\mathcal{F}_{\mathcal{T}}\hspace{.2cm}\text{and}\hspace{.2cm}A\in\mathcal{E}_{C}, \\
     \end{cases}
\]

proves the planarity of $G$.
\end{proof}

\section{A Jordan Curve Theorem for Tilings}\label{Sec:main theorem}

In this section we present our generalization of Khalimsky's Jordan curve theorem. To achieve this we rely on the connectedness graph of the digital version of a tiling.

First we generalize the definition of a digital Jordan curve that we have previously introduced in Section~\ref{sec:preliminares}.
\begin{definition}
Let $\mathcal{T}$ be a tiling. A subset $J$ of $\mathcal{D}_{\mathcal{T}}$ is a \textit{digital Jordan curve} if $\vert J\vert\geq4$ and if for every $j\in J$, $J\setminus\{j\}$ is a digital arc. 
\end{definition}

Recall that $J\setminus\{j\}$ is a digital arc in $\mathcal{D}_{\mathcal{T}}$ if and only if $J\setminus\{j\}$ is an induced simple path in the connectedness graph of $\mathcal{D}_{\mathcal{T}}$ (Remark \ref{obs:arcocamino}). This correspondence allows us to define a Jordan curve in the context of graph theory.

\begin{definition}
Let $G$ be a graph. A graph-theoretical Jordan curve is an induced cycle of length equal or greater than four.
\end{definition}

Let $G$ be a graph and $x$ a vertex of $G$. Keeping in mind the correspondence between a topological space and its connectedness graph, we denote by $\mathscr{A}(x,G)$ the subgraph induced by the vertices of $G$ adjacent to $x$. We say that $G$ is \textit{locally Hamiltonian} if for every vertex $x$ of $G$, $\mathscr{A}(x,G)$ has a Hamiltonian cycle. The following theorem is crucial for our generalization and it is due to V. Neumann-Lara and R. Wilson (\cite{Neumann-Lara}). 

\begin{theorem}\label{teo:jordangraficas}
If $G$ is a locally Hamiltonian, connected graph and $J\subset G$ is a graph-theoretical Jordan curve, then
\begin{enumerate}[\rm a)]
    \item the complement of $J$ has at most two connected components in $G$, 
    \item if $G$ is planar, then the complement of $J$ has exactly two connected components in $G$.
\end{enumerate}
\end{theorem}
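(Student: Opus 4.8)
The plan is to prove both parts from a single local analysis of the induced cycle $J=(v_1,\dots,v_k)$ (indices mod $k$, $k\ge 4$) inside the locally Hamiltonian graph $G$. First I would record two elementary facts. Since $\mathscr{A}(v_i,G)$ carries a Hamiltonian cycle it has at least three vertices, so $v_i$ has a neighbor off $J$; and since $J$ is induced with $k\ge 4$, the cycle-neighbors $v_{i-1}$ and $v_{i+1}$ are non-adjacent in $G$. Fix a Hamiltonian cycle $C_i$ of $\mathscr{A}(v_i,G)$. Because $v_{i-1}$ and $v_{i+1}$ are non-adjacent they are not consecutive on $C_i$, so deleting them from $C_i$ leaves two \emph{nonempty} arcs $A_i,B_i$ whose vertices are neighbors of $v_i$ that, by induced-ness, avoid $J$. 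Each arc is a path in $G$ all of whose vertices lie off $J$, hence each arc is connected inside $G\setminus J$. Finally, since $G$ is connected and $J\ne\emptyset$, every connected component of $G\setminus J$ is adjacent to $J$; thus it suffices to control the $(G\setminus J)$-connectivity of the set $W$ of off-$J$ neighbors of the $v_i$, and every vertex of $W$ lies in some arc $A_i\cup B_i$.

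For part (a) I would run a ``ladder'' argument around $J$. Let $a_i$ be the endpoint of $A_i$ that is consecutive to $v_{i+1}$ on $C_i$; then $a_i$ is a neighbor of $v_{i+1}$ lying off $J$, so $a_i\in A_{i+1}\cup B_{i+1}$, which shows that $A_i$ is connected in $G\setminus J$ to one of the arcs at $v_{i+1}$, and the same holds for $B_i$. Hence, building up $\bigcup_i(A_i\cup B_i)$ one rung at a time, each newly added pair $A_{i+1},B_{i+1}$ attaches to arcs already present, so the number of $(G\setminus J)$-components of $W$ never rises above its initial value $2$. Since the arcs cover $W$, every component of $G\setminus J$ meets $W$, and each $(G\setminus J)$-component contains vertices of $W$ from a single one of these $\le 2$ classes, I conclude that $G\setminus J$ has at most two components.

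For part (b) I would fix a planar embedding $|G|$; then $|J|$ is a simple closed curve and the classical Jordan curve theorem splits $\mathbb R^2\setminus|J|$ into $\Int|J|$ and $\Ext|J|$. Two observations reduce the statement to a single fact. No edge of $G$ can join a vertex embedded inside $|J|$ to one embedded outside, since such an edge would have to cross $|J|$, contradicting that distinct vertices and edges have disjoint images; therefore the interior and exterior vertices of $G\setminus J$ lie in different components, and the number of components of $G\setminus J$ equals (interior components) $+$ (exterior components). Combined with part (a), it is enough to exhibit at least one vertex strictly inside and one strictly outside $|J|$, for this forces the count to be exactly two.

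The main obstacle is precisely this last point: translating the two combinatorial arcs $A_i,B_i$ into the two geometric sides of $|J|$. The natural candidates are the two neighbors $a_i\in A_i$ and $b_i\in B_i$ flanking $v_{i+1}$ on $C_i$: both are adjacent to $v_i$ and to $v_{i+1}$, so each spans a triangle on the edge of $|J|$ joining $v_i$ to $v_{i+1}$ and hence sits on one definite side of $|J|$. I would argue that $a_i$ and $b_i$ land on \emph{opposite} sides, so that one of them witnesses the interior and the other the exterior. The difficulty is that local Hamiltonicity only provides \emph{some} Hamiltonian cycle $C_i$ in $\mathscr{A}(v_i,G)$, which need not follow the cyclic order in which the edges at $v_i$ leave $v_i$ in the embedding; reconciling the order along $C_i$ with the planar rotation at $v_i$ (equivalently, ruling out a ``M\"obius-type'' identification of the two sides around $J$, which is forbidden in the plane) is the crux, and it is exactly here that planarity must be used decisively rather than formally.
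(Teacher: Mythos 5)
A preliminary remark: the paper does not prove this statement at all --- Theorem \ref{teo:jordangraficas} is imported from Neumann-Lara and Wilson \cite{Neumann-Lara} --- so there is no in-paper argument to compare yours with, and your attempt has to stand on its own. Part a) is essentially sound. The two arcs $A_i,B_i$ obtained by deleting the non-adjacent vertices $v_{i-1},v_{i+1}$ from a Hamiltonian cycle $C_i$ of $\mathscr{A}(v_i,G)$ are nonempty, avoid $J$ because $J$ is induced, and are connected in $G\setminus J$; every component of $G\setminus J$ meets $W=\bigcup_i(A_i\cup B_i)$ because $G$ is connected; and the forward propagation $a_i\in A_{i+1}\cup B_{i+1}$, $b_i\in A_{i+1}\cup B_{i+1}$ does force $W$ to have at most two components in $G\setminus J$. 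One caveat: your inductive sentence is stated backwards. What is guaranteed is that the \emph{old} arcs $A_i,B_i$ attach to the new column, not that each new arc attaches to something already present; both old arcs may attach to $A_{i+1}$, leaving $B_{i+1}$ momentarily isolated. The invariant you actually need is that every component of $\bigcup_{j\le i}(A_j\cup B_j)$ contains an arc of column $i$ (true, since every arc propagates forward), which keeps the count at two through every stage and after wrapping around the cycle. This is a repairable imprecision, not a wrong idea.

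The genuine gap is in part b), and you have diagnosed it yourself. Your reduction is correct: no edge of $G$ can join a vertex inside $|J|$ to one outside, so by a) it suffices to exhibit one vertex of $G$ on each side of the embedded cycle. But the assertion that $a_i$ and $b_i$ fall on opposite sides of $|J|$ is precisely what you do not establish, and it is the only place where planarity and local Hamiltonicity genuinely interact. Here is one way to close it. Suppose no vertex of $G$ lies inside $|J|$. Since $J$ is induced, every edge of $G$ other than the $k$ cycle edges has image disjoint from $|J|$, hence lies entirely on the side of its off-$J$ endpoint, namely outside; so the interior of $|J|$ is a face of the embedding, and the two edges $v_1v_2$ and $v_1v_k$ are consecutive in the rotation at $v_1$. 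On the other hand, $\{v_1\}\cup\mathscr{A}(v_1,G)$ contains a wheel with hub $v_1$ and rim $C_1$ (with at least four rim vertices, since $v_2$ and $v_k$ are non-adjacent); wheels are $3$-connected, so by Whitney's theorem the spokes leave $v_1$ in the cyclic order of $C_1$ up to reflection, and since $v_2,v_k$ are not consecutive on $C_1$ they cannot be consecutive in the rotation at $v_1$ --- a contradiction. The symmetric argument produces a vertex outside. Without this step (or an equivalent use of the rigidity of the embedded local wheels), part b) remains unproved.
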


It can be seen from Neumann-Lara and Wilson's proof that if $G$ is an infinite, planar, locally Hamiltonian and connected graph and $J\subset G$ is a graph-theoretical Jordan curve, then out of the two connected components that make up the complement of $J$ in $G$, one is bounded whereas the other one is not. We call the set of vertices of the bounded component the \textit{interior} of $V_G\setminus V_J$, and denote it by $I(J)$. In a similar manner, the set of vertices of the unbounded component, $O(J)$, are the \textit{exterior} of $V_{G}\setminus V_{J}$.

\begin{theorem}\label{teo:jordanteselaciones}
Let $\mathcal{T}$ be a tiling and $J\subset\mathcal{D}_{\mathcal{T}}$ a digital Jordan curve. Then $\mathcal{D}_{\mathcal{T}}\setminus J$ has exactly two connected components.
\end{theorem}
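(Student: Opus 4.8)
The plan is to transfer the entire question to the connectedness graph $G=(V_G,E_G)$ of $\mathcal{D}_{\mathcal{T}}$ and then invoke the Neumann-Lara and Wilson theorem (Theorem~\ref{teo:jordangraficas}). First I would record the two easy hypotheses. Since $\mathcal{D}_{\mathcal{T}}$ is a quotient of the connected space $\mathbb{R}^2$ it is connected, so Theorem~\ref{teo:resumenconexidad}(2) makes $G$ connected; planarity of $G$ is exactly Proposition~\ref{prop:graficaplana}. Next I would check that a digital Jordan curve $J$ is precisely a graph-theoretical Jordan curve in $G$: by Remark~\ref{obs:arcocamino} each $J\setminus\{j\}$ being a digital arc means it is an induced simple path in $G$, and a short degree count finishes the job. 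Indeed, writing $n=|J|\geq 4$, the identity $|E(G[V_J])|-\deg(j)=n-2$ (valid for every $j$, since deleting $j$ removes exactly its incident edges and leaves an induced path on $n-1$ vertices) forces every vertex of $G[V_J]$ to have the same degree $2$; a $2$-regular graph in which every one-point deletion is connected must be a single cycle, so $G[V_J]$ is an induced cycle of length $\geq 4$. Finally, since a two-point subset of $\mathcal{D}_{\mathcal{T}}\setminus J$ carries the same topology whether viewed in the subspace or in $\mathcal{D}_{\mathcal{T}}$, the connectedness graph of $\mathcal{D}_{\mathcal{T}}\setminus J$ is exactly the induced subgraph $G-V_J$, and by Theorem~\ref{teo:resumenconexidad}(2) its connected components match those of $G-V_J$. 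Thus it suffices to show $G-V_J$ has exactly two components.

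To apply Theorem~\ref{teo:jordangraficas}(b) the one remaining hypothesis is that $G$ is locally Hamiltonian, i.e. that for each $x\in\mathcal{D}_{\mathcal{T}}$ the induced subgraph $\mathscr{A}(x,G)$ on the adjacency set $\mathscr{A}(x)$ has a Hamiltonian cycle. Here I would use the explicit formulas for $\mathscr{A}(x)$ displayed after Theorem~\ref{teo:teselacionAlexandrov}, together with one bookkeeping remark read off from those same formulas: two cells of $\mathcal{D}_{\mathcal{T}}$ of the same type (two vertices, two edges, or two faces) are never adjacent, so the only possible edges of $\mathscr{A}(x,G)$ join cells of consecutive dimension incident to one another. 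I would then split into three cases. If $x$ is a face, $\mathscr{A}(x)=\mathcal{V}_x\cup\mathcal{E}_x$ and the only adjacencies join an edge of the tile to its two endpoints; by Proposition~\ref{p:basic tilings}(4) this is exactly the alternating vertex-edge cycle bounding the tile, of length $\geq 4$. If $x$ is an edge, $\mathscr{A}(x)=\{v_1,v_2\}\cup\{F_1,F_2\}$, and since both endpoints are vertices of both adjacent tiles, $\mathscr{A}(x,G)$ is the complete bipartite graph $K_{2,2}$, that is, a $4$-cycle.

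The hard part, and the \emph{main obstacle}, is the vertex case. If $x\in\mathcal{V}_{\mathcal{T}}$ then $\mathscr{A}(x)=\mathcal{E}_x\cup\mathcal{F}_x$, and I must show these incident edges and faces assemble into a single alternating cycle $A_1,F_1,A_2,F_2,\dots,A_k,F_k$ around $x$; this is the analogue for vertices of Proposition~\ref{p:basic tilings}(4), and it is where the standing hypotheses on the tiling are really used. Local finiteness guarantees that only finitely many tiles, hence finitely many edges $A_1,\dots,A_k$, meet $x$. Choosing a small disk neighbourhood $D$ of $x$ in $\mathbb{R}^2$ that meets only the tiles containing $x$, these edges cut $D\setminus\{x\}$ into finitely many open sectors; using that each tile is a disk and each edge is an arc (conditions (d) and (e)), each sector lies in a single face and two consecutive sectors are separated by exactly one edge, so edges and faces alternate cyclically as one turns around $x$. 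Since adjacency of an edge $A$ and a face $F$ means $A\subset\partial F$, each face incident to $x$ is adjacent precisely to the two edges bounding it at $x$, which are consecutive in this cyclic order; combined with the bookkeeping remark, $\mathscr{A}(x,G)$ is exactly the $2k$-cycle, hence Hamiltonian. The delicate point to nail down is this link-is-a-cycle claim, and I would phrase it as the statement that a small circle about $x$ meets the edges at $x$ in exactly $k$ points cutting it into $k$ arcs, each interior arc lying in one face.

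With $G$ connected, planar and locally Hamiltonian and $J$ a graph-theoretical Jordan curve, Theorem~\ref{teo:jordangraficas}(b) yields exactly two connected components of $G-V_J$, and by the identification of the previous step $\mathcal{D}_{\mathcal{T}}\setminus J$ has exactly two connected components.
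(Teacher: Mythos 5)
Your proposal is correct and follows essentially the same route as the paper: reduce to the connectedness graph, verify connectivity, planarity (via Proposition~\ref{prop:graficaplana}) and local Hamiltonicity by the same three-case analysis on vertices, edges and faces, and then invoke the Neumann-Lara--Wilson theorem. The only differences are that you spell out two steps the paper leaves implicit --- the degree count showing $J$ induces a cycle, and the geometric argument that the link of a tiling vertex is an alternating edge--face cycle --- both of which are sound.
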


\begin{proof}

Let $G$ be the connectedness graph of $\mathcal{D}_{\mathcal{T}}$, it suffices to see that $G$ satisfies the hypotheses of Theorem \ref{teo:jordangraficas}. Let $q:\mathbb{R}^{2}\rightarrow \mathcal{D}_{\mathcal{T}}$ be the quotient function associated to the topology of $\mathcal{D}_{\mathcal{T}}$ (see equation~\ref{eq:quotient map}). We know that $q$ is continuous and therefore the connectedness of $\mathcal{D}_{T}$ is guaranteed by the connectedness of $\mathbb{R}^{2}$. Moreover, we know that $\mathcal{D}_{T}$ is an Alexandrov discrete space, so by  Theorem~\ref{teo:resumenconexidad} we have that $G$ is connected. In addition,   $G$ is a planar graph by Proposition \ref{prop:graficaplana}.

It only rests us to prove that $G$ is a locally Hamiltonian graph. By Theorem \ref{teo:teselacionAlexandrov}, if  $x$ is a vertex of $G$ we have that:

\[   
\mathscr{A}(x,G) = 
     \begin{cases}
       \mathcal{E}_{x}\cup\mathcal{F}_{x}&\quad\text{if $x\in\mathcal{V}_{\mathcal{T}}$,}\\
       \mathcal{V}_{x}\cup\mathcal{F}_{x} &\quad\text{if $x\in\mathcal{E}_{\mathcal{T}}$,} \\
        \mathcal{V}_{x}\cup\mathcal{E}_{x} &\quad\text{if $x\in\mathcal{F}_{\mathcal{T}}$.}\\
     \end{cases}
\]

If $x\in\mathcal{V}_{\mathcal{T}}$, we know that at least three tiles intersect in $x$. Each of these tiles has exactly two edges whose vertex is $x$. This fact allows us to infer that if we consider the set $\mathcal E_{x}$ (which is finite) and we number their elements clockwise, then every two consecutive edges determine a face of $x$ (and none of these faces is determined by two different pairs of edges). We also know that every edge of $x$ is determined by exactly two faces of $x$. Lastly, we know that in the connectedness graph there are no two adjacent edges nor two adjacent faces. Thus the subgraph induced by $\mathcal{E}_{x}\cup\mathcal{F}_{x}$ is a cycle wherein the vertices of the graph corresponding to edges of the tiling and the vertices of the graph corresponding to faces of the tiling alternate.

If $x\in\mathcal{E}_{\mathcal{T}}$, $x$ has exactly two vertices and two faces, furthermore, both vertices are vertices of both faces. For that reason the subgraph of $G$ induced by $\mathcal{V}_{x}\cup\mathcal{F}_{x}$ is a cycle with exactly four vertices. 

Finally, if $x\in\mathcal{F}_{\mathcal{T}}$ we recall the analysis made in Proposition \ref{prop:graficaplana} to conclude that the vertices and edges of $x$ alternate in $\partial x$, allowing us to verify that $\mathcal{V}_{x}\cup\mathcal{E}_{x}$ is a cycle.

In any case, we have proved that the subgraph induced by $\mathscr{A}(x,G)$ is a cycle and thus $G$ is locally Hamiltonian. In addition, we know that since $J$ is a digital Jordan curve, then the subgraph induced by $J$ in $G$ is a graph-theoretical Jordan curve. Hence, by  Theorem \ref{teo:jordangraficas}  we can conclude that $G\setminus J$ has exactly two connected components in $G$.

To finish the proof, recall that $\mathcal{D}_{\mathcal{T}}$ is an Alexandrov space. Since $G\setminus J$ has exactly two connected components in $G$, by Theorem~\ref{teo:resumenconexidad}  we have that $\mathcal{D}_{\mathcal{T}}\setminus J$ has exactly two connected components. Moreover, we know that the connected components of $\mathcal{D}_{\mathcal{T}}\setminus J$ are precisely the vertices of the connected components of $G\setminus J$; in other words, the connected components of $\mathcal{D}_{\mathcal{T}}\setminus J$ are its interior $I(J)$, which is bounded, and its exterior $O(J)$, which is not.
\end{proof}

\section{Open and Closed Digital Jordan Curves}\label{sec:open and closed}

 In \cite{Khalimsky4},  Khalimsky, Kopperman and Meyer developed a theory on the processing of digital pictures by means of studying boundaries. Their work relies on properties of the digital plane and on Khalimsky's Jordan curve theorem. The digital plane is not a homogeneous space, nonetheless for any two faces $C_{1}, C_{2}\in\mathcal{F}_{\mathcal{T}}$ (where $\mathcal{T}$ is the tiling of Example \ref{ej:teselacionplano}) there is a homeomorphism from the digital plane into itself that maps $C_1$ to $C_2$. For this reason, it is convenient to think about the pixels on a screen as the faces of the tiling.

Motivated by this approach, we are now interested in studying digital Jordan curves $J$ that satisfy two conditions: 
\begin{itemize}
 \item[(W1)]half of the elements of $J$ are faces of the tiling (and they alternate with the non-faces elements of the curve).
 \item[(W2)] $J$ encloses a face of the tiling (namely,  $I(J)\cap \mathcal F_{\mathcal T}\neq\emptyset$).
\end{itemize}

\begin{lemma}\label{lema:ultimolema}
Let $\mathcal{T}$ be a tiling and $J\subset\mathcal{D}_{\mathcal{T}}$ a digital Jordan curve. Then, for every $x\in J$, $\mathscr{A}(x)\cap I(J)\neq\emptyset$.
\end{lemma}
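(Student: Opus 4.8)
The plan is to analyze the local structure of $J$ at a fixed $x\in J$ by combining the cycle formed by $\mathscr{A}(x)$ with the planar embedding of the connectedness graph. Let $G$ be the connectedness graph of $\mathcal{D}_{\mathcal{T}}$. Since $J$ is a digital Jordan curve, it is an induced cycle in $G$ of length at least four, so $x$ has exactly two neighbors $a,b$ lying in $J$, and these are the only elements of $J$ in $\mathscr{A}(x)$. Because $J$ is induced and has length at least four, $a$ and $b$ cannot be adjacent in $G$: otherwise $\{a,b\}$ would be an edge of the induced subgraph $J$, forcing $a$ and $b$ to be consecutive on $J$, which is impossible as they are both neighbors of $x$. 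Throughout I would use that, as established in the proof of Theorem~\ref{teo:jordanteselaciones}, the subgraph induced by $\mathscr{A}(x)$ is a cycle, which I denote $C_x$.

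Next I would remove $a$ and $b$ from $C_x$. Since they are non-adjacent on $C_x$, this leaves two arcs, each containing at least one interior vertex; let $S_1$ and $S_2$ be the interior-vertex sets of these arcs, so that $\mathscr{A}(x)\setminus\{a,b\}=S_1\cup S_2$ with $S_1,S_2\neq\emptyset$. No vertex of $S_1\cup S_2$ lies in $J$, hence $S_1\cup S_2\subset I(J)\cup O(J)$ by Theorem~\ref{teo:jordanteselaciones}. Moreover, each $S_i$ is the vertex set of a subpath of $C_x$ all of whose vertices lie in $\mathcal{D}_{\mathcal{T}}\setminus J$; since consecutive vertices of such a subpath are adjacent, $S_i$ is connected in $G\setminus J$ and is therefore contained in a single component, either $I(J)$ or $O(J)$. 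It thus suffices to show that $S_1$ and $S_2$ land in different components, for then the one contained in $I(J)$ witnesses $\mathscr{A}(x)\cap I(J)\neq\emptyset$.

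The heart of the argument, and the step I expect to be the main obstacle, is to prove this separation via planarity. Let $\phi$ be the planar embedding of $G$ from Proposition~\ref{prop:graficaplana} and let $\Gamma=\phi(J)$ be the realized curve, a simple closed curve in $\mathbb{R}^{2}$; by the classical Jordan curve theorem $\Gamma$ bounds a bounded region $\operatorname{Int}(\Gamma)$ and an unbounded region $\operatorname{Ext}(\Gamma)$, which correspond respectively to $I(J)$ and $O(J)$. At $\phi(x)$ exactly two edges of $\Gamma$ meet (those toward $a$ and $b$), so a small disk around $\phi(x)$ is cut by $\Gamma$ into two sectors, one inside and one outside $\Gamma$. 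The key point is that the cyclic order of $\mathscr{A}(x)$ as the cycle $C_x$ agrees with the rotational order of the edges incident to $\phi(x)$ in the embedding $\phi$ --- this is precisely the alternation of faces and edges around a vertex, of faces and vertices around an edge, and of edges and vertices along the boundary of a face, already used in Theorem~\ref{teo:jordanteselaciones}. Consequently the edges joining $x$ to $S_1$ emanate into one sector and those joining $x$ to $S_2$ into the other. Since an edge of $G$ that is not an edge of $J$ meets $\Gamma$ only at $\phi(x)$, each such edge remains in a single region of $\mathbb{R}^{2}\setminus\Gamma$, so the vertices of $S_1$ lie in one region and those of $S_2$ in the other, giving the required separation. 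The delicate part to make fully rigorous is the identification of the combinatorial cyclic order of $\mathscr{A}(x)$ with the geometric rotation around $\phi(x)$, together with the correspondence between the graph-theoretic components $I(J),O(J)$ and the planar regions $\operatorname{Int}(\Gamma),\operatorname{Ext}(\Gamma)$.
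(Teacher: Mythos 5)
Your reduction is sound as far as it goes: since $J$ is an induced cycle of length at least $4$, the set $\mathscr{A}(x)\cap J=\{a,b\}$ consists of two non-adjacent vertices of the cycle $C_x$, the two arcs $S_1,S_2$ of $C_x\setminus\{a,b\}$ are nonempty and each lies in a single component of $G\setminus J$, and it suffices to show they lie in \emph{different} components (you are right that you then do not even need to know which one is $I(J)$). The problem is that everything then hinges on the sentence ``consequently the edges joining $x$ to $S_1$ emanate into one sector and those joining $x$ to $S_2$ into the other,'' and that step is not a routine verification you can defer --- it is the entire content of the lemma. Two things are being assumed there. First, that the combinatorial cyclic order of $C_x$ coincides with a well-defined rotational order of the edge-germs at $\phi(x)$: but the embedding built in Proposition~\ref{prop:graficaplana} is purely topological (the arcs are arbitrary homeomorphic images of intervals, e.g.\ pieces of tiling edges and images of radii under a homeomorphism $\gamma:\mathbb{B}^2\to T$), so ``rotational order'' and ``sectors'' are not available without a Schoenflies-type local argument. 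Second, even granting a rotation system, you still need that the two arcs of $\Gamma=\phi(J)$ at $\phi(x)$ split that rotation into exactly the inside-sector and the outside-sector of $\Gamma$; the local picture at $\phi(x)$ is governed by the two edges $xa$, $xb$ together with the \emph{far-away} return path of $J$ from $a$ to $b$, and relating the local sectors of the wheel on $\mathscr{A}(x)$ to the two global sides of $\Gamma$ requires a further theta-graph/Jordan-curve argument that you do not supply. Note also that the assumption you fall back on, namely that $I(J)$ and $O(J)$ ``correspond respectively'' to the bounded and unbounded regions of $\mathbb{R}^2\setminus\Gamma$, is nowhere established in the paper. So, as written, the proof is a correct strategy with its decisive step missing.

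It is worth contrasting this with how the paper avoids the issue entirely. The paper argues by contradiction: if $\mathscr{A}(x)\cap I(J)=\emptyset$, then in the induced subgraph $H$ on $I(J)\cup J$ the vertex $x$ has degree $2$, so it lies on the boundary cycle $\mathrm{C}$ of a bounded face of $H$; planting one new vertex $p$ inside that face and joining it to all of $V_{\mathrm{C}}$ preserves planarity, and then $\{p,a',b'\}$ and $\{x_0,x,x_1\}$ (with $a',b'$ interior vertices of the two arcs of $C_x$, both forced into $O(J)$) yield a subdivision of $K_{3,3}$, contradicting Kuratowski's theorem. This converts all of the ``which side of the curve am I on'' geometry that your argument must confront into a single combinatorial non-planarity certificate. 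If you want to keep your direct approach, you would essentially have to prove the wheel-separation statement (the spokes of an embedded wheel to the two arcs of the rim determined by two non-adjacent rim vertices are separated by the cross-cut through the hub, and this separation persists for the full curve $\Gamma$); at that point the Kuratowski route is shorter and cleaner.
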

\begin{proof}
Let $x\in J$ and let $G$ be the connectedness graph of $\mathcal{D}_{\mathcal{T}}$. We proceed by contradiction. Suppose that $\mathscr{A}(x)\cap I(J)=\emptyset$ and let $x_{0}$ and $x_{1}$ be the two adjacent points to $x$ in $J$. It is clear that $x_{0}$ and $x_{1}$ are not adjacent. We know that the subgraph induced by $\mathscr{A}(x)$ is a cycle with at least four elements. For that reason there are two paths contained in $\mathscr{A}(x,G)$ that only intersect at $x_{0}$ and $x_{1}$. Let $a$ and $b$ be two vertices, different from $x_{0}$ and $x_{1}$, in each of these paths. Since $\mathscr{A}(x)\cap I(J)=\emptyset$, then, with the exception of $x_{0}$ and $x_{1}$, the vertices of the paths we mentioned earlier are contained in $O(J)$. In particular, $a,b\in O(J)$.

Since $\mathcal{T}$ is a tiling, $G$ is a planar graph and thus there is a map $\psi:V_{G}\cup E_G\to\mathscr{P}(\mathbb{R}^{2})$ that attests the planarity of $G$. Let $H$ be the subgraph induced by the vertices $V_{H}:=I(J)\cup J$ in $G$. Being a subgraph of $G$, $H$ is a planar graph too. Since $\mathscr{A}(x)\cap I(J)=\emptyset$, we have that $\mathscr{A}(x)\cap (I(J)\cup J)=\{x_{0}, x_{1}\}$, therefore $x$ is a vertex of exactly two faces of $H$. Let $\mathrm{C}$ be the cycle that determines the bounded face of $H$ having $x$ as a vertex (the other face contains the exterior face of the subgraph induced by the vertices of $J$). Then we know that $\bigcup \psi(\mathrm{C})$ is the boundary of a set $\mathrm{D}$, homeomorphic to a closed disk, and that $\psi(x_0),\psi(x),\psi(x_1)\subset \bigcup \psi(\mathrm{C})=\partial \mathrm{D}$.

Now we construct a graph $\Gamma$ with the set of vertices $V_{\Gamma}=V_{G}\cup\{p\}$, where $p\notin V_{G}$, and the set of edges $E_{\Gamma}=E_{G}\cup\left\{\{p,w\}~\vert~w\in V_{\mathrm{C}}\right\}$. We sustain that $\Gamma$ is a planar graph. Indeed, since $\mathrm{D}$ is homeomorphic to a closed disk and $\psi(x_0),\psi(x),\psi(x_1)\subset\partial\mathrm{D}$, we can replicate the construction of the point $x_{C}$ and the sets $M_{x_{C},v}$ in the proof of Proposition \ref{prop:graficaplana} to prove the existence of a point $x_{p}\in\Int\mathrm{D}$ and sets $M_{x_{p},w}\subset\Int\mathrm{D}$ such that the function $\phi:V_{\Gamma}\cup E_{\Gamma}\rightarrow\mathscr{P}(\mathbb{R}^{2})$ defined as

\[   
\phi(x) = 
     \begin{cases}
      \psi(x) &\quad\text{if}\hspace{.2cm} x\in V_{G}\cup E_{G},\\
       \{x_{p}\} &\quad\text{if}\hspace{.2cm} x=p, \\
       M_{x_{p},w} &\quad\text{if}\hspace{.2cm} x\in E_{\Gamma}\setminus E_{G},\hspace{.2cm}\text{where}\hspace{.2cm}p\hspace{.2cm}\text{and}\hspace{.2cm}w\in V_{\mathrm{C}} \\
       &\quad\text{are the vertices of}\hspace{.2cm}x, \\
     \end{cases}
\]

proves the planarity of $\Gamma$.

However, if we take a look at the sets of vertices $\{p,a,b\}$ and $\{x_0,x,x_1\}$, we see that $\Gamma$ contains a subdivision of a complete bipartite graph $K_{3,3}$, which contradicts Kuratowski's theorem (\cite[Theorem 4.4.6]{Diestel}). Therefore we have proved that $\mathscr{A}(x)\cap I(J)\neq\emptyset$.
\end{proof}

\begin{proposition}\label{prop:ultimaprop}
Let $\mathcal{T}$ be a tiling and $J\subset\mathcal{D}_{\mathcal{T}}$ a digital Jordan curve. The following are equivalent:
\begin{enumerate}[\rm a)]
\item $J$ is closed.
\item $I(J)$ and $O(J)$ are open.
\item $J$ is nowhere dense.
\item $I(J)\cup O(J)$ is dense.
\item $J\cap\mathcal{F}_{\mathcal{T}}=\emptyset$.
\item $J=\partial(I(J))$.
\end{enumerate}

\end{proposition}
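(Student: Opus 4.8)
The plan is to establish the equivalences by exhibiting a cycle of implications together with a few key reformulations, relying throughout on the Alexandrov structure of $\mathcal{D}_{\mathcal{T}}$ and on the explicit descriptions of $N(x)$ and $\overline{\{x\}}$ provided by Theorem~\ref{teo:teselacionAlexandrov} and equation~\ref{eq:cerradura}. The crucial structural observation is that in $\mathcal{D}_{\mathcal{T}}$ the faces are exactly the open points (their smallest neighborhood is the singleton) and, dually, the vertices are exactly the closed points. This dictionary is what converts each topological condition into a combinatorial statement about which types of tiling elements can lie in $J$.

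First I would treat the core equivalence (a) $\Leftrightarrow$ (e). For (e) $\Rightarrow$ (a): if $J$ contains no faces, then every $j\in J$ is a vertex or an edge, and I would show $J$ is closed by verifying $\overline{\{j\}}\subset J$ fails to escape $J$ — more precisely, I would argue the complement $\mathcal{D}_{\mathcal{T}}\setminus J = I(J)\cup O(J)$ is open, which is cleaner. Here is where Lemma~\ref{lema:ultimolema} and Theorem~\ref{teo:jordanteselaciones} do real work: a point $y\notin J$ lies in $I(J)$ or $O(J)$, and I must show its smallest neighborhood $N(y)$ avoids $J$. For the converse (a) $\Rightarrow$ (e), I would argue contrapositively: if some face $C\in J$, then since $\{C\}$ is open but $C$ is not isolated in its adjacency structure, $C$ is not a closed point, and one shows $C\in\overline{\mathcal{D}_{\mathcal{T}}\setminus\{C\}}$ in a way that obstructs $J$ from being closed; concretely, any face $C$ has adjacent vertices and edges (by $\mathscr{A}(C)=\mathcal{V}_C\cup\mathcal{E}_C$), and I would locate a point of $I(J)\cup O(J)$ in $\overline{\{C\}}$'s relevant neighborhood to break closedness.

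Next I would dispatch the purely topological equivalences (a) $\Leftrightarrow$ (b) $\Leftrightarrow$ (c) $\Leftrightarrow$ (d). Since $\mathcal{D}_{\mathcal{T}}\setminus J = I(J)\cup O(J)$ and these are the two connected components guaranteed by Theorem~\ref{teo:jordanteselaciones}, the statement "$I(J)$ and $O(J)$ are open" is equivalent to "$\mathcal{D}_{\mathcal{T}}\setminus J$ is open," i.e.\ to "$J$ is closed," giving (a) $\Leftrightarrow$ (b) immediately (using also that in an Alexandrov space the components of an open set are open, so one implication is automatic and only the openness of the union is at stake). For (a) $\Leftrightarrow$ (c): a closed set $J$ is nowhere dense iff it has empty interior, and I would show $J$ has empty interior by noting any interior point would force a face of $J$ (the only open points), reducing (c) back to the face-free condition; combined with closedness this is routine. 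The equivalence (c) $\Leftrightarrow$ (d) is formal since $I(J)\cup O(J)=\mathcal{D}_{\mathcal{T}}\setminus J$ and "$J$ nowhere dense" for closed $J$ means exactly that its complement is dense.

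Finally I would handle (a) $\Leftrightarrow$ (f). Assuming (a), $J$ is closed so $J=\overline{J}$, and I must identify $J$ with $\partial(I(J)) = \overline{I(J)}\setminus\Int(I(J)) = \overline{I(J)}\setminus I(J)$ (using that $I(J)$ is open under (b)). The containment $\partial(I(J))\subset J$ follows because the boundary of one component of the complement of a closed separating set lands in that set; for the reverse $J\subset\partial(I(J))$, I would invoke Lemma~\ref{lema:ultimolema}, which guarantees every $x\in J$ has an adjacent point in $I(J)$, hence $x\in\overline{I(J)}$, and since $x\notin I(J)$ we get $x\in\partial(I(J))$. Conversely (f) $\Rightarrow$ (a) is immediate since a boundary is always closed. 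The main obstacle I anticipate is the careful verification in (a) $\Rightarrow$ (e) and its neighbors that the presence of a face in $J$ genuinely destroys closedness — this requires combining the adjacency description with the global separation from Theorem~\ref{teo:jordanteselaciones} (so that an adjacent vertex or edge of the face actually lands in $I(J)\cup O(J)$ rather than back in $J$), and Lemma~\ref{lema:ultimolema} is exactly the tool that prevents the degenerate situation where a face's entire neighborhood collapses into $J$.
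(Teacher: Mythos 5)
Your overall architecture is sound and close to the paper's: the paper proves a single cycle a)$\Rightarrow$b)$\Rightarrow$c)$\Rightarrow$d)$\Rightarrow$e)$\Rightarrow$f)$\Rightarrow$a), whereas you make (a)$\Leftrightarrow$(e) the hub and reduce (b), (c), (d) to it; both organizations close up logically, and both ultimately rest on the same two ingredients (the open-point/closed-point dictionary and Lemma~\ref{lema:ultimolema}). But there is one step in your sketch that, as stated, is not a valid inference, and it is exactly the point where the paper has to work: in (a)$\Rightarrow$(f) you write that Lemma~\ref{lema:ultimolema} gives every $x\in J$ an adjacent point in $I(J)$, ``hence $x\in\overline{I(J)}$.'' This does not follow. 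By Theorem~\ref{teo:resumenconexidad}, $\mathscr{A}(x)=\bigl(N(x)\cup\overline{\{x\}}\bigr)\setminus\{x\}$, and only a point of $N(x)\cap I(J)$ witnesses $x\in\overline{I(J)}$; a point $y\in\overline{\{x\}}\cap I(J)$ with $y\notin N(x)$ would not. Concretely, if $x$ is an edge, its two vertices are adjacent to $x$ but lie in $\overline{\{x\}}$, not in $N(x)=\{x\}\cup\mathcal{F}_x$, so an adjacent $I(J)$-point that happened to be a vertex would prove nothing. The paper closes this by a case analysis: under (e) the curve alternates vertices and edges, so both vertices of any edge $x\in J$ already lie in $J$; hence the $I(J)$-point supplied by Lemma~\ref{lema:ultimolema} must be one of the two faces of $x$, which does lie in $N(x)$ (and for $x$ a vertex, $\overline{\{x\}}=\{x\}$ so $\mathscr{A}(x)\subset N(x)$ and there is nothing to check). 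You need this argument, and your proposal does not contain it.

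The same alternation fact is the missing workhorse elsewhere in your sketch. In (e)$\Rightarrow$(a) you attribute the ``real work'' to Lemma~\ref{lema:ultimolema} and Theorem~\ref{teo:jordanteselaciones}, but neither is needed there: what makes $J$ closed (equivalently, makes $N(y)$ avoid $J$ for $y\notin J$) is precisely that a face-free induced cycle in the connectedness graph must alternate vertices and edges, so every edge of $J$ carries both of its vertices inside $J$, and therefore $\overline{\{j\}}\subset J$ for each $j\in J$ (which suffices in an Alexandrov space). Once you insert this observation in both places, your proof goes through; without it, the two steps above are genuinely open.
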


\begin{proof}

a)$\implies$b). $I(J)\cup O(J)$ is the complement of $J$ and thus it is open. In addition, since $\mathcal D_{\mathcal T}$  is an Alexandrov space,  Theorem \ref{teo:resumenconexidad} guarantees that $I(J)$ and $O(J)$  are open and closed in $I(J)\cup O(J)$, because they are the connected components of $I(J)\cup O(J)$. Since  the latter is open in $\mathcal D_{\mathcal T}$, we conclude that $I(J)$ and $O(J)$ are also open in $\mathcal{D}_{\mathcal{T}}.$

b)$\implies$c). We prove the contrapositive. Suppose there is an element $x\in\Int{\left(\overline{J}\right)}$.  This implies that $N(x)\subset\overline{J}$. In particular, we have that $x\in\overline{J}$ and hence $\overline{\{x\}}\subset\overline{J}$. Thus $\mathscr{A}(x)\cup\{x\}\subset\overline{J}$. In this situation, $\overline{J}$ can not be a digital Jordan curve since the connectedness graph of $\mathscr{A}(x)\cup\{x\}$ contains at least one vertex, one edge and one face adjacent to each other; in other words, the connectedness graph of $\mathscr{A}(x)\cup\{x\}$ contains cycles of length $3$. For that reason, $J$ is not closed and therefore $I(J)\cup O(J)$ is not open, which in turn implies that \textit{b)} is false.

c)$\implies$d).  Suppose that $I(J)\cup O(J)$ is not dense. Since $\mathcal{D}_{\mathcal{T}}$ is the disjoint union of $I(J)$, $O(J)$ and $J$, there is a point $x\in J$ such that $N(x)\cap\left( I(J)\cup O(J)\right)=\emptyset$. Therefore $N(x)\subset J$, so $x\in\Int(J)$ and thus $\Int{\left(\overline{J}\right)}\neq\emptyset$.

d)$\implies$e). Assume there is a point  $x\in J\cap\mathcal{F}_{\mathcal{T}}$. Thus $N(x)=\{x\}\subset J$. Since $J\cap\left(I(J)\cup O(J)\right)=\emptyset$, we have that $N(x)\cap\left(I(J)\cup O(J)\right)=\emptyset$ and therefore $I(J)\cup O(J)$ cannot be dense.

e)$\implies$f). First let us prove that  $J$ is closed. By e), the elements of $J$ are alternating vertices and edges. The closure of a vertex is the vertex  itself, and the closure of an edge is the union of the  edge and its two vertices. Since the vertices and edges alternate in $J$, the vertices of each edge in $J$ are contained in $J$. Therefore the closure of each element of $J$ is contained in $J$. Since $\mathcal{D}_{\mathcal{T}}$ is an Alexandrov discrete space this implies that $J$ is closed.

Now by the implication a)$\implies$b), we infer that $I(J)$ and $O(J)$ are open. This proves that  $\partial(I(J))=\overline{(I(J))}\setminus I(J)$. Furthermore, since $\mathcal{D}_{\mathcal{T}}$ is the disjoint union of $J$, $I(J)$ and $O(J)$, we conclude that   $I(J)\cup J$ is a closed set that contains $I(J)$. Thus
$$\partial(I(J))=\overline{(I(J))}\setminus I(J)\subset (I(J)\cup J)\setminus I(J)=J.$$

It rests us  to prove that $J\subset\partial (I(J))$. Since $I(J)$ and $J$ are disjoint sets, it is enough to prove that  $N(x)\cap I(J)\neq\emptyset$ for every $x\in J$. Let $x\in J$, by Lemma \ref{lema:ultimolema} we have that $\mathscr{A}(x)\cap I(J)\neq\emptyset$. If $x$ is an edge, then the elements of $N(x)$ are $x$ and its two faces, and  the elements of $\mathscr{A}(x)$ are the two faces and the two vertices of $x$. We know that $J$ and $I(J)$ are disjoint, and that the vertices of $x$ are contained in $J$. Thus $\mathscr{A}(x)\cap I(J)$ contains at least one of the faces of $x$, implying that $N(x)\cap I(J)\neq\emptyset$. Lastly, if $x$ is a vertex, then $\mathscr{A}(x)\subset N(x)$ and therefore $N(x)\cap I(J)\neq\emptyset$. In any case, $J\subset\partial (I(J))$.

f)$\implies$a). This follows from the fact that the boundary of any subset of a topological space is always closed.
\end{proof}

Analogously, we can obtain a dual version of  Proposition \ref{prop:ultimaprop}.  We let the proof to the reader, since it is similar to the previous one. 

\begin{proposition}\label{prop:ultimaultimaprop}
Let $\mathcal{T}$ be a tiling and $J\subset\mathcal{D}_{\mathcal{T}}$ a digital Jordan curve. The following are equivalent:
\begin{enumerate}[\rm(a)]
\item $J$ is open.
\item $I(J)$ and $O(J)$ are closed.
\item $J\cap\mathcal{V}_{\mathcal{T}}=\emptyset$.
\end{enumerate}

\end{proposition}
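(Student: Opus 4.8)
The plan is to imitate the proof of Proposition~\ref{prop:ultimaprop}, systematically interchanging the roles of faces and vertices (equivalently, of the minimal neighborhoods $N(x)$ and the closures $\overline{\{x\}}$), and to establish the three equivalences as one cyclic chain a)$\implies$b)$\implies$c)$\implies$a). Throughout I would rely on two facts already available: that $\mathcal{D}_{\mathcal{T}}$ is an Alexandrov space whose connected components are simultaneously open and closed (Theorem~\ref{teo:resumenconexidad}), and the explicit descriptions of $N(x)$ and $\overline{\{x\}}$ from Theorem~\ref{teo:teselacionAlexandrov} and equation~\ref{eq:cerradura}. The guiding observation of the duality is that a vertex is a closed point, $\overline{\{v\}}=\{v\}$, exactly as a face is an open point, $N(C)=\{C\}$.

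For a)$\implies$b) I would note that if $J$ is open then $I(J)\cup O(J)=\mathcal{D}_{\mathcal{T}}\setminus J$ is closed. Since every subspace of an Alexandrov space is Alexandrov, Theorem~\ref{teo:resumenconexidad} applies to $I(J)\cup O(J)$, so its connected components $I(J)$ and $O(J)$ are closed in it; being closed in a closed subspace, they are closed in $\mathcal{D}_{\mathcal{T}}$.

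For b)$\implies$c) I would argue by contraposition, and this is the one step that uses Lemma~\ref{lema:ultimolema}. Assuming a vertex $v\in J\cap\mathcal{V}_{\mathcal{T}}$ exists, the lemma furnishes some $y\in\mathscr{A}(v)\cap I(J)$; as $v$ is a vertex, $y\in\mathcal{E}_v\cup\mathcal{F}_v$, so in either case $v$ is a vertex of $y$ and hence $v\in\mathcal{V}_y\subset\overline{\{y\}}$ by equation~\ref{eq:cerradura}. Then $v\in\overline{I(J)}\setminus I(J)$, so $I(J)$ is not closed and b) fails. For c)$\implies$a) I would use that the connectedness graph admits only adjacencies of the types vertex--edge, vertex--face and edge--face, never face--face or edge--edge; consequently, if $J$ contains no vertex, its cyclic sequence alternates edges and faces. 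The two cycle-neighbors of an edge $x\in J$ are then faces adjacent to $x$, but the faces adjacent to an edge are precisely its two faces, so $N(x)=\{x\}\cup\mathcal{F}_x\subset J$; for a face $x\in J$ we have $N(x)=\{x\}\subset J$ at once. As $N(x)\subset J$ for every $x\in J$, the set $J$ is open.

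I expect the step b)$\implies$c) to be the only real obstacle: it is where the \emph{global} placement of the curve, rather than purely local incidence data, must intervene, and it is precisely here that Lemma~\ref{lema:ultimolema} is indispensable, since without the guarantee that every point of $J$ is adjacent to the interior one cannot force a hypothetical vertex of $J$ into $\overline{I(J)}$. The other two implications are either formal (a)$\implies$b)) or a local inspection of the cycle (c)$\implies$a)) and should go through routinely.
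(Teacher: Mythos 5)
Your proof is correct, and since the paper leaves this proposition to the reader as ``similar to the previous one,'' your systematic dualization of Proposition~\ref{prop:ultimaprop} is exactly the intended strategy: a)$\implies$b) and c)$\implies$a) match the paper's pattern step for step. The one place you genuinely diverge is b)$\implies$c), where you route the argument through Lemma~\ref{lema:ultimolema} and declare it indispensable. It is not: b) gives that $I(J)\cup O(J)$ is closed, hence $J$ is open, so a hypothetical vertex $v\in J$ satisfies $N(v)=\{v\}\cup\mathcal{E}_v\cup\mathcal{F}_v\subset J$; choosing an edge $E\in\mathcal{E}_v$ and a face $F\in\mathcal{F}_E$ (so that $F\in\mathcal{F}_v$ as well) produces the triangle $\{v,E,F\}$ inside the subgraph induced by $J$, which is impossible in an induced cycle of length at least four. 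This local argument is the exact dual of the step b)$\implies$c) in Proposition~\ref{prop:ultimaprop} and avoids the Lemma entirely --- which matters, because the Lemma's proof invokes planarity and Kuratowski's theorem, global machinery this proposition does not actually need. Your version remains valid (the deduction $v\in\mathcal{V}_y\subset\overline{\{y\}}\subset\overline{I(J)}$ is sound); it just imports more than it has to.
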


After Proposition~\ref{prop:ultimaultimaprop}-(c), we conclude that 
open digital curves are made exclusively of faces and edges (hence, they satisfy condition W1).
 Furthermore, if $J$ is an open digital Jordan curve, then $I(J)$ is closed, so for every $x\in I(J)$, $\overline{\{x\}}\subset I(J)$, and whether $x$ is a vertex, an edge or a face, $\overline{\{x\}}$ always contains a vertex of the tiling.
 This reasoning yields the following remark.
 \begin{remark}\label{r:Jopen I contains vertex}
 If $J$ is an open digital curve, then $I(J)\cap \mathcal V_{\mathcal T}\neq \emptyset$.
 \end{remark}

However, we cannot  guarantee that $I(J)$ contains a face of the tiling. This situation is pictured for the tiling of the plane with regular hexagons in Figure \ref{fig:ultimafigura}. In this figure, the dotted line intersects all the elements of a digital Jordan curve consisting only of edges and faces of the tiling, whilst the thick black line intersects all the elements of its interior, none of which is a face of the tiling.

\begin{figure}[htp]
    \centering
    \includegraphics[width=12cm]{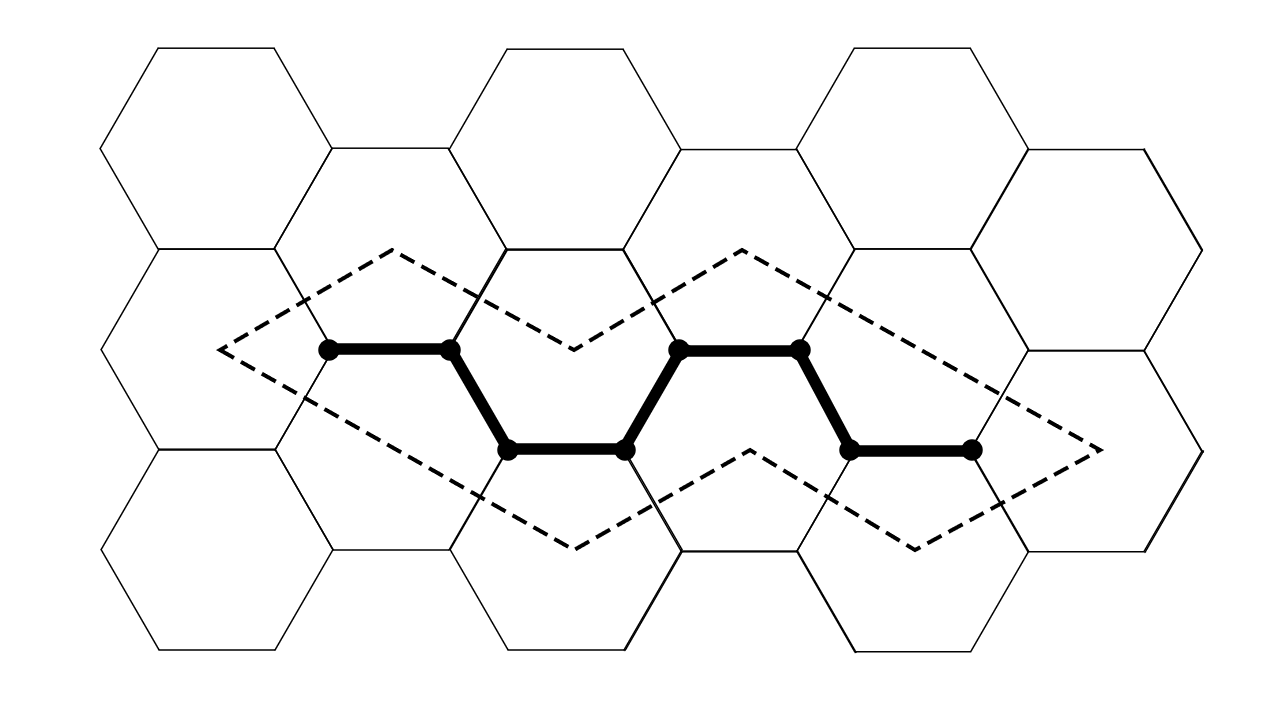}
    \caption{A digital Jordan curve and its interior.}
    \label{fig:ultimafigura}
\end{figure}

On the other hand, by Proposition~\ref{prop:ultimaprop}-(d), every  closed digital Jordan curve  satisfies condition W2. However, this kind of curves may not be very interesting, since they do not contain any face (and therefore they do not satisfy condition W1).
This is why, if we are looking for a class of digital Jordan curves satisfying W1 and W2, we need to add an extra condition. We formalize this in the  following section.

\section{Well-behaved Digital Jordan Curves}\label{Sec:Well behaved}
 Intuitively, the problem of the example in Figure~\ref{fig:ultimafigura} is that the elements of $J$ that are faces of the tiling, are very close to each other. In order to prevent this from happening, we need the following.

\begin{definition}
Let $\mathcal{T}$ be a tiling and $J\subset\mathcal{D}_{\mathcal{T}}$ a digital Jordan curve. We say that $J$ is \textit{well-behaved} if for every $C\in\mathcal F_{\mathcal T}\cap J$ and every $x\in\mathcal V_{C}$, $\mathcal{F}_{x}\not\subset J$.
\end{definition}
Clearly the curve of Figure~\ref{fig:ultimafigura} is not well-behaved.

\begin{lemma}\label{l:wb caras en interior}
\label{obs:ultimaobs}
Let $\mathcal{T}$ be a tiling and $J\subset\mathcal{D}_{\mathcal{T}}$ a well-behaved digital Jordan curve. If $\mathcal{V}_{\mathcal{T}}\cap I(J)\neq\emptyset$ (in particular, if $J$ is a well-behaved open digital jordan curve), then $\mathcal{F}_{\mathcal{T}}\cap I(J)\neq\emptyset$. 
\end{lemma}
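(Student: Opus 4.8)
The plan is to argue by a short case split on the faces surrounding a tiling-vertex that lies in $I(J)$, where the well-behaved hypothesis is exactly what closes the bad case. The structural fact I will lean on is a confinement principle for the components of the complement of $J$: since $I(J)$ and $O(J)$ are the two connected components of $G\setminus J$ (where $G$ is the connectedness graph of $\mathcal D_{\mathcal T}$), any vertex of $G$ adjacent to a point of $I(J)$ must itself lie in $I(J)\cup J$. Indeed, if such an adjacent point lay outside $J$, the edge joining it to $I(J)$ would live in $G\setminus J$, forcing it into the same component as $I(J)$ and hence into $I(J)$.

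First I would fix a tiling-vertex $v\in\mathcal V_{\mathcal T}\cap I(J)$, which exists by hypothesis; in the ``in particular'' situation where $J$ is a well-behaved open curve, such a $v$ is supplied by Remark~\ref{r:Jopen I contains vertex}. Using the adjacency description from Theorem~\ref{teo:teselacionAlexandrov}, I have $\mathscr A(v)=\mathcal E_v\cup\mathcal F_v$. By the confinement principle above, every element of $\mathcal E_v\cup\mathcal F_v$ lies in $I(J)\cup J$; in particular every face of $v$ lies in $I(J)\cup J$.

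Then I split into two cases. If some face $F\in\mathcal F_v$ satisfies $F\in I(J)$, then $F\in\mathcal F_{\mathcal T}\cap I(J)$ and we are done at once. Otherwise no face of $v$ lies in $I(J)$, so the previous paragraph forces $\mathcal F_v\subset J$. Since $v$ belongs to at least three tiles (by the definition of a vertex), whose interiors are distinct faces of $v$, the set $\mathcal F_v$ is nonempty, so I may pick a face $C\in\mathcal F_v\subset J$. Here $C\in\mathcal F_{\mathcal T}\cap J$, and because $C$ is a face of $v$ we have $v\in\mathcal V_C$. Applying the well-behaved hypothesis to this $C$ and to $v\in\mathcal V_C$ yields $\mathcal F_v\not\subset J$, contradicting $\mathcal F_v\subset J$. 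This eliminates the second case and completes the argument.

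I do not anticipate a real obstacle; the proof is essentially immediate once the right objects are lined up. The two places needing care are the confinement claim, which must be deduced from the fact that $I(J)$ and $O(J)$ are \emph{distinct} components of $G\setminus J$ rather than merely asserted, and the symmetry $C\in\mathcal F_v \iff v\in\mathcal V_C$ (``$C$ is a face of $v$'' is the same relation as ``$v$ is a vertex of $C$''), which is precisely what lets the well-behaved condition engage with the vertex $v$.
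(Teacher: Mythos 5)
Your proof is correct and follows essentially the same route as the paper's: both arguments confine $\mathcal F_v$ to $I(J)\cup J$ and then invoke the well-behaved condition to rule out $\mathcal F_v\subset J$. The only cosmetic difference is that you justify the confinement step graph-theoretically (an edge of $G\setminus J$ from $I(J)$ cannot reach $O(J)$), whereas the paper does it topologically via the minimal neighborhood $N(v)\subset U\subset I(J)\cup J$, using that $I(J)$ is open in $I(J)\cup O(J)$; these are equivalent by Theorem~\ref{teo:resumenconexidad}.
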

\begin{proof}

 Since $I(J)$ and $O(J)$ are the components of $Y:=I(J)\cup O(J)$, we can find an open set $U\subset \mathcal D_{\mathcal T}$ such that
$U\cap Y=I(J)$. Thus, 
$$I(J)\subset U\subset I(J)\cup J.$$

Assume that  $\mathcal F_{\mathcal T}\cap I(J)=\emptyset$, and  pick an element $x\in\mathcal{V}_{\mathcal{T}}\cap I(J)$. Notice that 
$$\mathcal F_x\subset N(x)\subset U\subset I(J)\cup J.$$
Now, we can use the assumption that 
 $\mathcal{F}_{\mathcal{T}}\cap I(J)=\emptyset$ to conclude that   $\mathcal{F}_{x}\subset J$, which is impossible because $J$ is well-behaved. Therefore $\mathcal F_{\mathcal T}\cap I(J)\neq\emptyset$, as desired.
\end{proof}

As an immediate consequence of Lemma~\ref{obs:ultimaobs}, we have the following.
\begin{remark}\label{remark.wellbehaved are nice}
Let $\mathcal{T}$ be a tiling and $J\subset\mathcal{D}_{\mathcal{T}}$ a well-behaved open digital Jordan curve. Then $J$ satisfies condition W1 and W2.
\end{remark}

To finish this paper we will use the theory we have developed in order to prove a generalization of  Rosenfeld's theorem that we presented on the introduction (Theorem~\ref{teo:Rosenfeld}). 

Let $\mathcal{D}_{\mathcal{T}}$ be the digital plane. Recall that Rosenfeld works with a squared grid endowed with the $k$-adjacency relations ($k\in\{4,8\}$) previously described.  If we think of the faces of the digital plane as the points in Rosenfeld's grid, two faces $C_{1}, C_{2}\in\mathcal{F}_{\mathcal{T}}$ are \textit{$4$-adjacent} if $\mathcal{E}_{C_{1}}\cap\mathcal{E}_{C_{2}}\neq\emptyset$. Similarly, we say that they are  \textit{$8$-adjacent} if $\mathcal{V}_{C_{1}}\cap\mathcal{V}_{C_{2}}\neq\emptyset$. 
We also give the corresponding definition of a simple closed $4$-path with at least five points: $J\subset \mathcal{D}_{\mathcal{T}}$ is a \textit{$4$-Jordan curve} if $J$ is an open digital Jordan curve such that $\vert\mathcal{F}_{\mathcal{T}}\cap J\vert\geq5$ and for every $C\in\mathcal{F}_{\mathcal{T}}\cap J$, $C$ has exactly two $4$-adjacent faces in $J$. This reinterpretation of Rosenfeld's work, motivates the following definition

\begin{definition}
Let  $\mathcal{T}$ be a tiling and $\mathcal{D}_{\mathcal{T}}$ its digital version. 
\begin{enumerate}[\rm(1)]
    \item We say that two faces $C_1,C_2\in\mathcal F_{\mathcal T}$ are edge-adjacent (or that $C_1$ and $C_2$ are edge-neighbors) if $\mathcal E_{C_1}\cap\mathcal E_{C_2}\neq\emptyset$.
    \item We say that two faces $C_1,C_2\in\mathcal F_{\mathcal T}$ are vertex-adjacent  (or that $C_1$ and $C_2$ are vertex-neighbors) if $\mathcal V_{C_1}\cap\mathcal V_{C_2}\neq\emptyset$.
    \item A finite secuence of faces $(C_0,\dots, C_n)$ is an edge-path of faces (vertex-path of faces) if $C_i$ is edge-adjacent (vertex-adjacent) to $C_{i+1}$ and $C_{i-1}$, for every $i=1,\dots, n-1.$
    \item A set $Y\subset\mathcal D_{\mathcal T}\cap\mathcal F_{\mathcal T}$ is edge-connected (vertex-connected) if there is an edge-path (vertex-path) between any two elements $C_1,C_2\in Y$, such that every element of the path belongs to $Y$.
    \item A digital Jordan curve $J\subset \mathcal D_{\mathcal T}$ is an edge-Jordan curve, if $J$ is open and for every face $C\in J\cap\mathcal F_{\mathcal T}$ there are  exactly two faces in $J$ that are edge-adjacent with $C$.
\end{enumerate}
\end{definition}

Let  $\mathcal{T}$ be a tiling and define
$$\Delta(\mathcal T)=\sup\{|\mathcal F_{x}|\}_{ x\in\mathcal V_{\mathcal T}}.$$
Observe that if $J$ is an edge-Jordan curve and $\mathcal F_{x}\subset J$ for a certain $x\in \mathcal V_x$, then every element of $\mathcal F_x$ has exactly two edge-neighbors in $J$, and therefore $J$ cannot contain any other face of the tiling.  This yields the following remark.

\begin{remark}\label{r: grado acotado implica wb}
Let  $\mathcal T$ be a tiling and $J\subset\mathcal D_{\mathcal T}$ be an edge-Jordan curve. If $|J\cap\mathcal F_{\mathcal T}|\geq \Delta(\mathcal{T})+1$, then $J$ is well-behaved.
\end{remark}

We can now generalize Rosenfeld's theorem as follows.

\begin{theorem}\label{teo:cuadrosrosenfeld}
Let $\mathcal T$ be a tiling with $\Delta(\mathcal T)<\infty$. If $J\subset\mathcal{D}_{\mathcal{T}}$ is an edge-Jordan curve with $|J\cap\mathcal F_{\mathcal T}|\geq \Delta(\mathcal{T})+1,$ then $I(J)\cap \mathcal{F}_{\mathcal{T}}\neq\emptyset$, $O(J)\cap \mathcal{F}_{\mathcal{T}}\neq\emptyset$ and these two sets are vertex-connected.
\end{theorem}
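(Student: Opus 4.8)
The plan is to treat the three assertions in turn, obtaining the first two quickly from the machinery already in place and reserving the real work for the vertex-connectivity.

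First I would dispose of $I(J)\cap\mathcal F_{\mathcal T}\neq\emptyset$. Since $J$ is an edge-Jordan curve it is open, and since $|J\cap\mathcal F_{\mathcal T}|\geq\Delta(\mathcal T)+1$, Remark~\ref{r: grado acotado implica wb} shows that $J$ is well-behaved. Thus $J$ is a well-behaved open digital Jordan curve, and Remark~\ref{remark.wellbehaved are nice} gives condition W2, which is precisely $I(J)\cap\mathcal F_{\mathcal T}\neq\emptyset$. For $O(J)\cap\mathcal F_{\mathcal T}\neq\emptyset$ I would use that $J$ is finite (each $J\setminus\{j\}$ is an induced simple path, hence a finite sequence), so $q^{-1}(J)$ is a bounded subset of $\mathbb R^2$. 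As $I(J)$ is the bounded component, it lies in a bounded region and is therefore finite by local finiteness; hence any tile $T$ whose closure misses a large disk containing $q^{-1}(J)$ has its face $\Int(T)$ outside $J\cup I(J)$, and so $\Int(T)\in O(J)$.

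For the vertex-connectivity I would first record a uniform consequence of well-behavedness: for \emph{every} vertex $v\in\mathcal V_{\mathcal T}$ one has $\mathcal F_v\not\subset J$ (if $v$ is a vertex of some face of $J$ this is the definition, and otherwise $\mathcal F_v\cap J=\emptyset$). Because $J$ is open, no vertex lies on $J$, so each vertex belongs to $I(J)$ or $O(J)$; and since these are the vertex sets of the two components of $G\setminus J$, any face adjacent in $G$ to $v$ that is not in $J$ must lie in the same component as $v$. Consequently $\mathcal F_v\cap I(J)\neq\emptyset$ whenever $v\in I(J)$, and symmetrically $\mathcal F_v\cap O(J)\neq\emptyset$ whenever $v\in O(J)$. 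The key structural observation is then that the edges of the tiling are redundant for connectivity: for an edge $E$ with faces $\{C',C''\}$ and vertices $\{v_1,v_2\}$, each vertex of $E$ is a vertex of each face of $E$, so $\{C',C'',v_1,v_2\}$ already induces a $4$-cycle in the connectedness graph without using $E$. Using the connectedness of $I(J)$ (Theorem~\ref{teo:resumenconexidad}, applied to the Alexandrov subspace $I(J)$), I would take a digital path between two interior faces and argue that it can be taken inside $I(J)\cap(\mathcal F_{\mathcal T}\cup\mathcal V_{\mathcal T})$, that is, using only faces and vertices. Such a digital path $C_0,v_0,C_1,v_1,\dots,C_m$ is exactly a vertex-path of faces in $I(J)$, since consecutive faces share the intervening vertex and are therefore vertex-adjacent; the same argument applies verbatim to $O(J)$.

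The main obstacle is justifying that the edge-steps may be removed, equivalently that deleting the edge-elements of $I(J)$ (an independent set of vertices of $G[I(J)]$) leaves the faces and vertices connected. Whenever an interior edge $E$ has a face in $I(J)$, its deletion is harmless by the $4$-cycle observation above; the delicate case is an interior edge both of whose faces lie on $J$, whose only interior neighbours are its two vertices $v_1,v_2$. Such a configuration forces the two faces to be consecutive on the curve $J$ and to share a \emph{second} edge of $J$, i.e.\ it forces two tiles to share two distinct edges, and I expect verifying that this edge is never a bridge of $I(J)$ to be the crux of the proof. I would resolve it by appealing to planarity: the region bounded by the Jordan curve $|J|$ is simply connected, so $v_1$ and $v_2$ remain joined inside $I(J)$ through the pocket of interior faces enclosed between the two shared edges, letting one reroute the path around $E$ and complete the reduction to a vertex-path of faces.
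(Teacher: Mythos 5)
Your treatment of the two nonemptiness assertions is correct and essentially identical to the paper's (well-behavedness via Remark~\ref{r: grado acotado implica wb}, then Lemma~\ref{l:wb caras en interior} for the interior; finiteness of $I(J)\cup J$ for the exterior). For the vertex-connectivity, however, there is a genuine gap, and you have located it yourself: everything reduces to showing that an edge $E\in I(J)$ both of whose faces lie on $J$ causes no harm, and at that point you offer only a one-sentence appeal to planarity (``$v_1$ and $v_2$ remain joined \dots through the pocket of interior faces''). Nothing in your argument establishes that this pocket contains any element of $I(J)$ at all, nor that it joins $v_1$ to $v_2$ inside $I(J)\setminus\{E\}$ using only faces and vertices --- which is the very connectivity statement you are trying to prove, so the reasoning is circular as it stands.

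The paper closes this hole differently and more cheaply: it proves that the delicate case \emph{cannot occur}. Its Claim states that every $A\in\mathcal E_{\mathcal T}\cap I(J)$ satisfies $\mathcal F_A\cap I(J)\neq\emptyset$: if both faces of $A$ lay on $J$, then, since $J$ is open and hence contains no vertices (Proposition~\ref{prop:ultimaultimaprop}) and alternates faces and edges, each face of $A$ would already have two edge-neighbors on $J$ coming from the cycle structure of $J$, and $A$ would supply a third, contradicting the definition of an edge-Jordan curve. (Your observation that the two faces could instead be consecutive on $J$ and share a \emph{second} edge shows that this count needs the extra remark that the three neighboring faces are distinct; but that is a short repair, not a reason to admit the configuration and fight it with topology.) Once the Claim is in hand, your own reduction goes through completely: every edge-element of $G[I(J)]$ has a face in $I(J)$, the four-cycle on $\{C',C'',v_1,v_2\}$ lets you splice it out of any path, and the resulting alternating face--vertex path is a vertex-path of faces. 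So your route (delete the edge-elements of $G[I(J)]$) is a workable alternative to the paper's extremal argument (take a shortest arc with a maximum number of faces and perform an exchange), but as written it is missing exactly the lemma that makes either approach work.
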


\begin{proof} Since $I(J)\cup J$ is a finite set and $\mathcal F_{\mathcal T}$ is infinite, we always have that $O(J)\cap \mathcal{F}_{\mathcal{T}}\neq\emptyset$. On the other hand, 
by Remark~\ref{r: grado acotado implica wb} and Lemma~\ref{l:wb caras en interior}, we infer that  $I(J)\cap \mathcal{F}_{\mathcal{T}}\neq\emptyset$. 

Let us prove that $I(J)\cap\mathcal{F}_{\mathcal{T}}$ is vertex connected.
We know that $I(J)$ is connected and closed. Thus, for every $C\in \mathcal{F}_{\mathcal{T}}\cap I(J)$ we have that $\mathcal{V}_{C}\cup\mathcal{E}_{C}\subset I(J)$. 

\textit{Claim:} For every $A\in\mathcal{E}_{\mathcal{T}}\cap I(J)$,  $\mathcal{F}_{A}\cap I(J)\neq\emptyset$. Indeed,  if  $A\in\mathcal{E}_{\mathcal{T}}\cap I(J)$ is such that $\mathcal{F}_{A}\cap I(J)=\emptyset$, we can use the fact that  $O(J)$ is closed to conclude that  $\mathcal{F}_{A}\subset J$. Since $J$ is an open digital Jordan curve in $\mathcal D_{\mathcal T}$,  Proposition~\ref{prop:ultimaultimaprop} guarantees that $J\cap \mathcal V_{\mathcal T}=\emptyset$. Thus, each face of $A$ has a common edge with three faces of $J$, which contradicts the definition of $J$. This proves the claim.

 Since $I(J)$ is a connected Alexandrov space, Theorem \ref{teo:resumenconexidad} ensures that $I(J)$ is digitally arc connected. Thus for any $C_{1},C_{2}\in I(J)\cap \mathcal{F}_{\mathcal{T}}$ there is a digital arc of minimum length $\alpha_{0}=(x_0=C_{1}, x_1, \dots, x_n=C_{2})$ in $I(J)$ from $C_{1}$ to $C_{2}$. We can also assume that $\alpha_0$ contains a maximum number of faces. Since $\alpha_0$ has minimum length, there are at most three consecutive elements of the same tile in $\alpha_0$. Let
$$m:=\max\{k\in\mathbb{N}\mid \forall i\leq k:  x_{2i}\in\mathcal{F}_{\mathcal{T}}\}.$$ 

Suppose that  $m\neq\frac{n}{2}$. First observe that if $2m= n-1$, then $x_{2m}$ and $x_n$ would be two consecutive faces in $\alpha_0$, which is impossible. Thus,  $0\leq  2m<n-1$.  In this situation, $x_{2m}$ is a face of the tiling while $x_{2m+2}$ is not. If $x_{2m+1}\in\mathcal{E}_{x_{2m}}$, then $x_{2m+2}\in\mathcal{V}_{x_{2m}}$, contradicting the fact that $\alpha_0$ is a digital arc. Therefore we infer that  $x_{2m+1}\in\mathcal{V}_{x_{2m}}$, $x_{2m+2}\in\mathcal{E}_{\mathcal{T}}\setminus\mathcal{E}_{x_{2m}}$ and $x_{2m+1}\in\mathcal{V}_{x_{2m+2}}$.

By the claim, we can pick a face  $C\in\mathcal{F}_{x_{2m+2}}\cap I(J)$. Once again, since $\alpha_0$ is a digital arc, we conclude that $C\notin\alpha_0$ and $x_{2m+3}\in\mathcal{V}_{C}$ (if $2m+2<n$). Thus, $\{x_{2m_{0}+1}, x_{2m_{0}+2}, x_{2m_{0}+3}\}\subset\overline{\{C\}}$, implying that $x_{2m_{0}+4}\notin\overline{\{C_{x_{2m_{0}+2}}\}}$.

For $i\in\{0,\dots ,n\}$, let

\[   
y_i = 
     \begin{cases}
       x_i &\quad\text{if}\hspace{.2cm}i\neq 2m_{0}+2,\\
        C&\quad\text{if}\hspace{.2cm}i= 2m_{0}+2.\\
     \end{cases}
\]

and define $\alpha_1:=(y_0, y_1, \dots, y_n)$. Then $\alpha_1$ is a digital arc from $C_{1}$ to $C_{2}$ in $I(J)$ containing more faces than $\alpha_0$. This contradicts the fact that $\alpha_0$ contains a maximum number of faces. Thus, we can conclude that  $m=\frac{n}{2}$ and therefore $(x_0,x_2,\dots,x_{2m})$ is a simple vertex-path between $C_1$ and $C_2$. This proves that $I(J)\cap\mathcal{F}_{\mathcal{T}}$ is vertex-connected.

Similarly we can prove that $O(J)\cap\mathcal{F}_{\mathcal{T}}$ is vertex-connected. This completes the proof.
\end{proof}

It is clear that Theorem~\ref{teo:cuadrosrosenfeld} directly implies Rosenfeld's theorem  (Theorem~\ref{teo:Rosenfeld}).
Moreover, we can also deduce similar results for every grid of points induced by the faces of a regular tiling of the plane, such as  the one given by Kopperman in  \cite[Theorem 26]{Kopperman} for  a hexagonal grid of points.

Finally, there is a dual theorem, also due to Rosenfeld (see  \cite[Theorem 3.3]{Rosenfeld6}), that is obtained by interchanging the roles of $4$ and $8$ in Theorem \ref{teo:Rosenfeld}. To finish this paper, we present a generalization of this result in the context that we have been working on.
In order to do this, consider a tiling $\mathcal T$ and define a \textit{vertex-Jordan curve} as a digital Jordan curve $J\subset \mathcal D_{\mathcal T}$ such that $J\subset \mathcal F_{\mathcal T}\cup\mathcal V_{\mathcal T}$ and with the property that  every $C\in\mathcal{F}_{\mathcal{T}}\cap J$ has exactly  two vertex-neighbors in  $J$.

\begin{theorem}
Let $\mathcal{T}$ be a tiling  with $\Delta (\mathcal T)<\infty$. Suppose that  $J\subset\mathcal{D}_{\mathcal{T}}$ is a vertex-Jordan curve such that  $|J\cap \mathcal F_{\mathcal T}|\geq \Delta (\mathcal T)+1$. Then

\begin{enumerate}[\rm(1)]
    \item $J$ is well-behaved.
    \item $I(J)\cap \mathcal{F}_{\mathcal{T}}\neq\emptyset$ and $O(J)\cap \mathcal{F}_{\mathcal{T}}\neq\emptyset$.
    \item  $I(J)\cap \mathcal{F}_{\mathcal{T}}$ and $O(J)\cap \mathcal{F}_{\mathcal{T}}$ are edge-connected.
\end{enumerate}
\end{theorem}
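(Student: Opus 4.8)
My plan is to establish the three items in order, using throughout that the connectedness graph $G$ of $\mathcal D_{\mathcal T}$ has neither face-face nor vertex-vertex edges, so the induced cycle $J\subset\mathcal F_{\mathcal T}\cup\mathcal V_{\mathcal T}$ must alternate faces and vertices. Thus $J=(v_0,C_0,\dots,v_{k-1},C_{k-1})$ with $k=|J\cap\mathcal F_{\mathcal T}|\ge\Delta(\mathcal T)+1\ge 4$, and the hypothesis that each curve-face has exactly two vertex-neighbours in $J$ forces $C_0,\dots,C_{k-1}$ to be a single $k$-cycle under vertex-adjacency. For (1) I would imitate Remark~\ref{r: grado acotado implica wb}: if $C\in\mathcal F_{\mathcal T}\cap J$, $x\in\mathcal V_C$ and $\mathcal F_x\subset J$, then the faces of $x$ all lie in $J$ and are pairwise vertex-adjacent (they share $x$), so each has at least $|\mathcal F_x|-1$ vertex-neighbours in $J$; the "exactly two" condition gives $|\mathcal F_x|\le 3$, and since every vertex meets at least three tiles, $|\mathcal F_x|=3$. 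These three faces then already close up the whole vertex-cycle of $J$, whence $|J\cap\mathcal F_{\mathcal T}|=3<\Delta(\mathcal T)+1$, a contradiction; so $J$ is well-behaved.

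For (2), $O(J)\cap\mathcal F_{\mathcal T}\ne\emptyset$ is immediate since $I(J)\cup J$ is finite while $\mathcal F_{\mathcal T}$ is infinite. To see $I(J)\cap\mathcal F_{\mathcal T}\ne\emptyset$, fix $C\in J\cap\mathcal F_{\mathcal T}$ and use Lemma~\ref{lema:ultimolema} to pick $y\in\mathscr A(C)\cap I(J)=(\mathcal V_C\cup\mathcal E_C)\cap I(J)$. If $y\in\mathcal V_{\mathcal T}$ then $\mathcal V_{\mathcal T}\cap I(J)\ne\emptyset$ and Lemma~\ref{l:wb caras en interior} (with (1)) produces an interior face. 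If $y$ is an edge $A$, then $A\in I(J)$ and, inspecting $\mathcal V_A$ and $\mathcal F_A$ (each adjacent to $A$, and recalling $J$ has no edges), any member of $\mathcal V_A\cup\mathcal F_A$ lying outside $J$ belongs to $I(J)$; this yields either an interior vertex (then Lemma~\ref{l:wb caras en interior}) or an interior face directly. The sole remaining possibility $\mathcal V_A\cup\mathcal F_A\subset J$ forces the four-cycle $w_1\,F_1\,w_2\,F_2\,w_1$ to be all of $J$, contradicting $|J|=2k\ge 8$.

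For (3) I record two facts, valid for either component $\mathrm K\in\{I(J),O(J)\}$: since $J$ contains no edges, every $C\in\mathcal F_{\mathcal T}\cap\mathrm K$ has $\mathcal E_C\subset\mathrm K$; and the dual of the Claim in the proof of Theorem~\ref{teo:cuadrosrosenfeld} holds, namely every $w\in\mathcal V_{\mathcal T}\cap\mathrm K$ satisfies $\mathcal F_w\cap\mathrm K\ne\emptyset$ (otherwise every face of $w$, being adjacent to $w\in\mathrm K$, would lie in $J$, i.e. $\mathcal F_w\subset J$, contradicting well-behavedness). I then take any digital arc in $\mathrm K$ between two faces $C_1,C_2\in\mathcal F_{\mathcal T}\cap\mathrm K$ and transform it into a face-edge alternating arc, i.e. an edge-path of faces. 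The mechanism is local: if a tiling-vertex $w$ occurs in the arc with arc-neighbours $a,b$, recall from the proof of Theorem~\ref{teo:jordanteselaciones} that $\mathscr A(w,G)$ is a cycle of alternating edges and faces of $w$; replacing $w$ by one of the two arcs of $\mathscr A(w,G)$ joining $a$ to $b$ removes the vertex without introducing new ones, and stays in $\mathrm K$ as soon as that arc avoids the curve-faces $\mathcal F_w\cap J$ (its edges are in $\mathrm K$ because $\mathcal E_w\subset\mathrm K$, and its remaining faces, being adjacent to $w\in\mathrm K$, lie in $\mathrm K$). Since each such replacement strictly decreases the number of tiling-vertices in the arc, iterating eliminates all of them and leaves an edge-path, which works uniformly for the finite $I(J)$ and the infinite $O(J)$.

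The step I expect to be the genuine obstacle is justifying that such a reroute is always available, that is, that the curve-faces around a tiling-vertex $w\notin J$ form a single contiguous block of $\mathscr A(w,G)$ and hence never separate two of the faces of $w$ lying in $\mathrm K$. Here I would use that, since adjacency preserves components, all faces of $w$ lie in $\mathrm K\cup J$, so the $\mathrm K$-faces of $w$ are exactly the complement of the curve-faces in $\mathscr A(w,G)$; contiguity of the latter is therefore what makes the interior (resp. exterior) faces of $w$ consecutive. To prove contiguity, suppose two curve-faces $F,F'\in\mathcal F_w\cap J$ are non-consecutive around $w$, with faces of $\mathrm K$ on both sides. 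Being vertex-adjacent and both in $J$, they are vertex-neighbours in $J$, hence cycle-adjacent and joined by a curve-vertex $u\in J$ with $u\in\mathcal V_F\cap\mathcal V_{F'}$ and $u\ne w$ (as $w\notin J$). I would then derive a contradiction from planarity in the spirit of Lemma~\ref{lema:ultimolema}: adding an auxiliary vertex inside the bounded region cut out by the relevant cycle and joining it to $\{w,u\}$ and the two separating $\mathrm K$-faces exhibits a subdivision of $K_{3,3}$ in the planar graph $G$ (Proposition~\ref{prop:graficaplana}), impossible by Kuratowski's theorem. This forces the curve-faces at every $w$ to be consecutive, so one side of each reroute is free of curve-faces and the elimination goes through. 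The identical local reasoning applies to $O(J)$, giving edge-connectedness of both $I(J)\cap\mathcal F_{\mathcal T}$ and $O(J)\cap\mathcal F_{\mathcal T}$.
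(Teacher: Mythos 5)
Your arguments for items (1) and (2) are correct and close to the paper's: for (1) both proofs exploit that the faces of a common vertex are pairwise vertex-adjacent and then contradict $|J\cap\mathcal F_{\mathcal T}|\geq\Delta(\mathcal T)+1$; for (2) you route through Lemma~\ref{lema:ultimolema} and a case analysis on an interior neighbour of a curve-face, whereas the paper assumes $I(J)\cap\mathcal F_{\mathcal T}=\emptyset$ and shows $I(J)$ would consist solely of edges, but both reach the same ``$J$ would be a $4$-cycle'' contradiction and both then invoke Lemma~\ref{l:wb caras en interior}.

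The gap is in item (3), precisely at the step you flagged. Contiguity of $\mathcal F_w\cap J$ around a non-curve vertex $w$ is \emph{not} enough to guarantee the reroute. The cycle $\mathscr{A}(w,G)$ alternates edges and faces of $w$; if two curve-faces $F_{i-1},F_i\in\mathcal F_w\cap J$ are consecutive around $w$, the edge $E_i$ of $w$ lying between them has $\mathcal F_{E_i}=\{F_{i-1},F_i\}\subset J$, so $E_i$ is an \emph{isolated} vertex of the graph $\mathscr{A}(w,G)\setminus J$ even though the curve-faces form a single block. Your arc can enter $w$ through exactly such an edge (coming from its other tiling-vertex $w'$, via the segment $w'\,E_i\,w$), and then \emph{neither} of the two arcs of $\mathscr{A}(w,G)$ from $a=E_i$ to $b$ avoids $J$: both begin with a curve-face. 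So the assertion that ``one side of each reroute is free of curve-faces'' fails, and the elimination of tiling-vertices can stall. (Your $K_{3,3}$ argument for contiguity is also only sketched --- you attach the auxiliary vertex to four branch vertices $w,u,G_1,G_2$ without exhibiting the $3+3$ bipartition and the nine internally disjoint paths --- but even granting contiguity the reroute does not go through.)

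What the paper proves instead is the stronger local statement that $\mathscr{A}(w)$ contains \emph{at most one} element of $J$ for each tiling-vertex $w$ on the path: since $J$ has no edges, two elements of $\mathscr{A}(w)\cap J$ would be two faces of $w$, hence vertex-adjacent through $w\notin J$; but each curve-face already has its two permitted vertex-neighbours in $J$ supplied by its two cycle-neighbours across curve-vertices, and the vertex-Jordan condition leaves no room for an extra vertex-adjacency realized through $w$. With at most one vertex of the cycle $\mathscr{A}(w,G)$ deleted, the complement is a path, so $x_{m-1}$ and $x_{m+1}$ are always joinable inside $\mathscr{A}(w,G)\setminus J$ and the vertex count strictly drops. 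To repair your proof you would need to upgrade your contiguity claim to this ``at most one curve-face per non-curve vertex'' statement (equivalently, rule out two curve-faces sharing a non-curve vertex), rather than merely showing the curve-faces around $w$ are consecutive.
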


\begin{proof}
Since $I(J)$ and $O(J)$ are the components of $Y:=I(J)\cup O(J)$, we can find two  sets $U, K\subset \mathcal D_{\mathcal T}$, with $U$ open and $K$ closed, such that
$$U\cap Y=I(J)=K\cap Y.$$
(1) If $J$ is not well-behaved, there exists a vertex $x\in\mathcal V_{\mathcal T}$, such that $\mathcal F_{x}\subset J$. Now, for every $C_1,C_2\in \mathcal F_{x}$, $C_1$ and $C_2$ are vertex-adjacent. Since $J$ is a vertex-Jordan curve and $|\mathcal F_{x}|\geq 3$, we infer that  $J\cap \mathcal F_{\mathcal T}=\mathcal F_x$. Thus
$$|\mathcal F_x|=|J\cap \mathcal F_{\mathcal T}|\geq \Delta (\mathcal T)+1\geq |\mathcal F_x|+1,$$
a contradiction.

(2) Since we always have that $O(J)\cap \mathcal{F}_{\mathcal{T}}\neq\emptyset$, we only need to prove the other inequality.  By contradiction, assume that $I(J)\cap \mathcal{F}_{\mathcal{T}}=\emptyset$. We claim that $\mathcal V_{\mathcal T}\cap I(J)\neq\emptyset$. Indeed, if $\mathcal V_{\mathcal T}\cap I(J)=\emptyset$, then every element of $I(J)$ is an edge. Thus, for every $E\in I(J)$, we get the following inclusions
$$\mathcal F_{E}\subset N(E)\subset U\subset I(J)\cup J\;\text{ and }\;\mathcal V_{E}\subset \overline{\{E\}}\subset K\subset I(J)\cup J.$$ This implies that $\mathscr A (E)=\mathcal V_{E}\cup\mathcal F_{E}\subset J$. Since $J$ is a digital Jordan curve, $J$ cannot contain any other element of $\mathcal D_{\mathcal T}$. This implies that $J$ contains only two faces, which contradicts the hypothesis $|J\cap \mathcal F_{\mathcal T}|\geq \Delta (\mathcal T)+1$. Therefore $\mathcal V_{\mathcal T}\cap I(J)\neq\emptyset$ and since $J$ is well behaved, we infer from Lema~\ref{l:wb caras en interior} that $I(J)$ contains a face of the tiling. This contradicts the original assumption that $I(J)\cap \mathcal{F}_{\mathcal{T}}=\emptyset$.

(3) Since $I(J)$ is connected, for any $C_1,C_2\in I(J)\cap \mathcal F_{\mathcal T}$, there exists a digital path $\alpha_0=(C_1=x_0, x_1,\dots, x_n=C_2)$ completely contained in $I(J)$. Furthermore,  we can assume that $\alpha_0$ contains a minimum number of vertices (namely, every other path between $C_1$ and $C_2$ contains at least the same amount of vertices than $\alpha_0$).

If $\alpha_0$ contains a vertex, let 
$$m:=\min\{k\in \{1,\dots, n\}\mid x_k\text{ is a vertex}\}.$$
In this case $x_{m-1}$ and $x_{m+1}$ belong to $\mathscr{A}(x_m)=\mathcal E_{x_m}\cup\mathcal F_{x_m}$. 

Observe that $\mathscr{A}(x_m)$ can only contain one element of $J$, at most. Indeed, if $D_1,D_2\in J\cap \mathscr{A}(x_m)$, then $\{D_1, D_2\}\subset \mathcal F_{x_m}$, because $J$ does not have any edge. This implies that $D_1$ and $D_2$ are vertex-adjacent, but since $x_{m}\notin J$, there must exist two other faces, $D, D'\in J$, different from $D_1$ and $D_2$, such that $D_1$ is vertex adjacent to $D$ and $D'$. Hence $J$ cannot be a vertex-Jordan curve, a contradiction.

Since $\mathscr{A}(x_m)$ induces a cycle in the connectedness graph of $\mathcal D_{\mathcal T}$, we can find a digital path $\beta=(x_{m-1}=y_0,y_1,\dots, y_k=x_{m+1})$, where each $y_i$ belongs to $\mathscr{A}(x_m)\setminus J$. In particular, there are no vertices in $\beta$. Furthermore, since $N(x_m)\subset U\subset I(J)\cup J$ and $\overline{\{x_m\}}\subset K\subset I(J)\cup J$, we get that 
$$\{y_1,\dots, y_k\}\subset \mathscr{A}(x_m)\setminus J\subset \big(I(J)\cup J\big)\setminus J=I(J).$$
Therefore the path 
$$\alpha_1:=(C_1=x_0,\dots ,x_{m-1},y_1,\dots, y_{k-1},x_{m+1},\dots ,x_n=C_2)$$
is a digital path between $C_1$ and $C_2$, which is completely contained in $I(J)$ and it contains one vertex less than $\alpha_0$, a contradiction. Thus $\alpha_0$ does not contain any vertex, and therefore  $x_{k}\in \mathcal F_{\mathcal T}$ if $k$ is even and  $x_{k}\in \mathcal E_{\mathcal T}$ if $k$ is odd. Then 
$$\alpha:=(C_1=x_0,x_2,\dots, x_n=C_2)$$ is an edge-path of faces, which proves that $I(J)$ is edge-connected, as desired. 
Analogously we can prove that $O(J)$ is edge-connected.
\end{proof}

\bibliographystyle{amsplain}

\end{document}